\newtheorem{theorem}{Theorem}[section]
\theoremstyle{plain}
\newtheorem{lemma}[theorem]{Lemma}
\newtheorem{proposition}[theorem]{Proposition}
\numberwithin{equation}{section}
\theoremstyle{definition}
\newtheorem{definition}{Definition}
\theoremstyle{remark}
\newtheorem{remark}{Remark}[section]
\def\re{\mathbb{R}}
\newcommand{\C}{\mathbb{C}}
\newcommand{\R}{\mathbb{R}}
\title[HSL graphs]{Optimal Regularity for Hölder continuous Hamiltonian Stationary Lagrangian graphs 
}
\author{Arunima Bhattacharya and W. Jacob Ogden}
\address{Department of Mathematics, Phillips Hall\\
 The University of North Carolina at Chapel Hill, NC 27599 }
\email{arunimab@unc.edu}
\address{Department of Mathematics, Padelford Hall\\
University of Washington, Seattle, WA 98195
}
\email{wjogden@uw.edu}
\begin{document}

\begin{abstract}
In this paper, we establish optimal regularity for Hölder continuous Hamiltonian stationary Lagrangian graphs in $\mathbb{C}^n$. We prove that such a graph is smooth whenever its Hölder exponent is strictly larger than $\frac{1}{3}$ and the Lagrangian phase is supercritical, which yields semi-convexity of the potential. We establish the optimality of our result by constructing explicit singular solutions to the fourth order Hamiltonian stationary equation when the Hölder exponent of the graph is $\frac{1}{3}$. The singular solutions exist even under the strongest convexity assumption on the Lagrangian phase, namely the hypercritical phase, which enforces convexity of the potential. This presents a striking departure from the theory of special Lagrangian graphs.

\end{abstract}

\maketitle

\section{Introduction}

The theory of special Lagrangian graphs in $\mathbb{C}^n$ has been extensively studied with many deep and elegant results. In contrast, the corresponding fourth order generalization, namely Hamiltonian stationary Lagrangian graphs, remains largely unexplored. In this work, we show that fundamental regularity properties of solutions to the special Lagrangian equation break down entirely in the Hamiltonian stationary setting. At the same time, we identify sharp conditions under which comparable regularity results continue to hold.

In $\mathbb{C}^{n}= \R^n \times \R^n$, with the standard K\"ahler structure, the Lagrangian phase or angle, $\Theta$, of the Lagrangian submanifold $L_u=\{(x,Du)\:\vert\ x \in \Omega\subseteq \mathbb{R}^n\}$, can be expressed as 
\[
\Theta=\sum_{i=1}^n \arctan \lambda_i
\]
where $\lambda_i$'s are the eigenvalues of the Hessian of $u$ and $\Omega$ is a fixed bounded domain in $\R^n $. The mean curvature vector along
$L_u$ can be written as
\[
\vec{H}=J\nabla_g\Theta
\]
where $\nabla_g$ is the gradient operator of $L_{u}$ (see the work of Harvey-Lawson \cite[(2.19)]{HL}) and $g$ is the induced metric from the Euclidean metric
on $\mathbb{C}^{n}$, which can be written as
\[
g=I_n+(D^{2}u)^{2}.
\]
We say that $u$ solves the \textit{Hamiltonian stationary equation}  when the Lagrangian phase $\Theta$ is $g$-harmonic (cf. Oh \cite{Oh}, Schoen-Wolfson \cite[Proposition 2.2]{SW03}):
\begin{equation}
\Delta_g\Theta=0 \label{hstat0}.
\end{equation}
The gradient graph of $u$ is a Hamiltonian stationary Lagrangian submanifold, which is a critical point of the volume functional under Hamiltonian deformations, that is, variations generated by $J\nabla_g \eta$ for compactly supported smooth functions $\eta$.

When $\Theta$ is constant, $u$ solves the \textit{special Lagrangian equation}\begin{equation}
   \Theta=c. \label{slag}
\end{equation}
The gradient graph of $u$ is a special Lagrangian submanifold, which is a critical point of the volume functional under all compactly supported variations of the submanifold $L_u$.

In this paper, we resolve the question of regularity of H\"older continuous Hamiltonian stationary Lagrangian graphs. Before we state our main results, we first clarify the notion of a weak solution to the Hamiltonian stationary equation.

\begin{definition} \label{def1}
    We say that $u \in C^1( \Omega)$ is a weak solution of \eqref{hstat0} if 
\begin{enumerate} 
\item $u \in W^{2,1}( \Omega)$,
\item $\sqrt{  g }  \in L^1(\Omega)$,
\item $\Theta \in W^{1,2} ( \Omega)$, and
\item for all compactly supported smooth functions $\eta $ on the cylinder $\Omega \times \R^n$, $u$ satisfies

\[\displaystyle \int  \langle \nabla_g \Theta , \nabla_g \eta\rangle \: d (\mathcal{H}^n  \llcorner L_u) =0.\]  
\end{enumerate}
\end{definition}

Note that, under the above conditions, $\nabla_g \eta$ is well defined a.e. by projecting the Euclidean gradient onto the tangent plane of $L_u$.

\medskip
Our first result is the following.

\begin{theorem} \label{Thm_reg_holder}
Let $u\in C^{1, \beta}$ be a weak solution of \eqref{hstat0} on the unit ball $B_1\subset \mathbb{R}^n$  with $\beta \in ( \frac13,1) $ and $|\Theta| \geq (n-2)\frac{\pi}{2}+\delta$. Then $u$
is smooth in $B_{1/2}$ and satisfies the estimate
    \begin{equation*}
        \|u\|_{C^{k, \alpha}(B_{1/2})}\leq C(\alpha,k,  n,  \delta, \|u\|_{C^{1, \beta}(B_{1})} ) 
        \end{equation*} where $k\geq 2$.
\end{theorem}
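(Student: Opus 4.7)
The plan is a two-step bootstrap: first I would upgrade the regularity from $C^{1,\beta}$ to $C^{1,1}$, and then invoke the existing smoothness theory for $C^{1,1}$ (or $C^2$) Hamiltonian stationary Lagrangian graphs under supercritical phase to conclude $u\in C^\infty$ with the claimed quantitative bounds.

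For the semi-convexity step, I would use the supercritical phase to get that the super-level set $\{M\in \mathrm{Sym}(n):\Theta(M)\geq (n-2)\tfrac{\pi}{2}+\delta\}$ is a convex subset of symmetric matrices, a classical property of the Lagrangian phase operator at supercritical values. The fact that $D^2 u$ weakly lies in this set, in the sense allowed by items (1)--(3) of Definition~\ref{def1}, should force $u+\tfrac{C}{2}|x|^2$ to be convex for some $C=C(\delta,n,\|u\|_{C^{1,\beta}})$. A convenient way to implement this is via a Lewy rotation of the Lagrangian graph, which converts the supercritical condition into a uniform lower bound on the Hessian of the rotated potential, equivalent to semi-convexity of $u$.

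The main obstacle is the second step, upgrading from $C^{1,\beta}$ to $C^{1,1}$. With semi-convexity in hand, I would try an improvement-of-flatness / Campanato iteration at each base point $x_0\in B_{1/2}$. At scale $r>0$, rescale
\[
u_r(x)=\frac{u(x_0+rx)-u(x_0)-rDu(x_0)\cdot x}{r^{1+\beta}},\quad x\in B_1.
\]
The family $\{u_r\}$ is uniformly bounded in $C^{1,\beta}(B_1)$, and the supercritical phase is scale invariant. Using the $W^{1,2}$ bound on $\Theta$ together with semi-convexity, I would extract $C^1$-subsequential limits $u_0$ and show, by linearizing at the quadratic part and passing to the limit in the weak formulation of Definition~\ref{def1}, that $u_0$ solves a linear constant-coefficient fourth-order equation for which a Liouville rigidity statement forces $u_0$ to be a quadratic polynomial. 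Iterating the resulting quadratic approximation over dyadic scales, the exponent $\beta>\tfrac13$ should enter exactly as the threshold needed to close the Caccioppoli / energy estimate in the fourth-order setting, matching the sharpness demonstrated by the paper's singular examples. The output is $D^2 u\in L^\infty_{\mathrm{loc}}(B_1)$, i.e., $u\in C^{1,1}_{\mathrm{loc}}(B_1)$.

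Once $u\in C^{1,1}$, the induced metric $g=I_n+(D^2u)^2$ is bounded above and below (using also semi-convexity), and $\Theta$ is a bounded $W^{1,2}$ weak solution of the linear, now uniformly elliptic, equation $\Delta_g \Theta=0$. De Giorgi--Nash--Moser yields $\Theta\in C^\alpha$, and viewing $\Theta(D^2u)=\Theta(x)$ as a concave, uniformly elliptic fully nonlinear equation in $D^2u$ (consequence of supercritical phase), Evans--Krylov promotes $D^2 u$ to $C^\alpha$. Standard Schauder estimates and bootstrap then complete the proof of $u\in C^\infty$ together with the quantitative $C^{k,\alpha}$ estimates. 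The heart of the argument is the second step; the semi-convexity reduction and the final bootstrap are in the spirit of existing work on Lagrangian phase equations.
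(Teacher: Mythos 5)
Your two-step outline identifies the right target (upgrade to $C^{1,1}$, then invoke the existing $C^{1,1}$-to-smooth theory), but the key step—the passage from $C^{1,\beta}$ to $C^{1,1}$—contains a genuine gap, and the paper's argument proceeds along an entirely different geometric route.

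The Campanato/improvement-of-flatness iteration you sketch does not go through as stated. First, the rescaling $u_r(x)=\bigl[u(x_0+rx)-u(x_0)-rDu(x_0)\cdot x\bigr]/r^{1+\beta}$ is not compatible with the Hamiltonian stationary equation: $\Delta_g\Theta=0$ is scale invariant under $u\mapsto u(r\cdot)/r^2$, not $u\mapsto u(r\cdot)/r^{1+\beta}$. Under your scaling $D^2u_r = r^{\beta-1}D^2u(x_0+r\cdot)$, which is unbounded as $r\to 0$ precisely when $u\notin C^{1,1}$, so the induced metrics $g_r = I+(D^2u_r)^2$ degenerate and there is no clean way to pass to the limit in the weak formulation of Definition \ref{def1}. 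Second, the Liouville rigidity for the purported linear constant-coefficient fourth-order limit is unspecified and is not known in a form that would apply. Third, and most decisively, your scheme gives no mechanism that singles out the exponent $1/3$; the claim that it is ``the threshold needed to close the Caccioppoli/energy estimate'' is not substantiated, and the sharpness examples in Theorem \ref{Thm_sing_soln} (which exist even with \emph{hypercritical} phase and convex potential) show that the obstruction is genuinely geometric rather than energetic.

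The paper's actual argument is the following. Using the supercritical phase hypothesis, a downward rotation by a small angle $\varphi<\delta/n$ produces a new potential $\bar u$ that is $C^{1,1}$ (supercriticality gives a lower Hessian bound; the rotation produces an upper one) and still solves \eqref{hstat0} with supercritical phase, hence $\bar u$ is smooth by \cite[Theorem 1.2]{ChenWarren}. Rotating back, the gradient graph $L_u$ is a \emph{smooth submanifold} of $\mathbb{C}^n$; the only possible failure of $C^{1,1}$ regularity of $u$ is therefore a vertical tangent plane of $L_u$. Lemma \ref{HolderImprovement} then shows that a smooth graph parametrized by a $C^\beta$ function with $\beta>1/(l+1)$ is in fact $C^{1/l}$, and Lemma \ref{Noninjective} is a degree-theoretic argument ruling out vertical tangent planes at which the graph has even leading Taylor order. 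Combining these two lemmas: $\beta>1/3$ first upgrades to $C^{1,1/2}$, and then the even order $2$ is excluded by the degree argument, so there is no vertical tangent plane and $u\in C^{1,1}_{\mathrm{loc}}$. This is where the $1/3$ threshold truly comes from: at a singularity the smooth graph must have odd leading Taylor order $2k+1\geq 3$, hence H\"older exponent at most $1/3$, which is exactly what the singular examples realize.
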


We prove the sharpness of the above Hölder exponent $\beta$ by constructing the following counterexamples, which hold even under the strongest convexity condition on the Lagrangian phase, namely, $|\Theta|>(n-1)\frac{\pi}{2}$.

\begin{theorem}\label{Thm_sing_soln} On $B_1 \subset \R^n$ $(n \geq 3)$, for each positive integer $k$ there exists a weak solution $u^{(n,k)}$ to \eqref{hstat0} with $|\Theta|> (n-1) \frac \pi 2$
such that
$u^{(n,k)} \in  C^{1, \frac 1{2k+1}} \setminus C^{1,\frac1{2k+1} + \varepsilon }$ for all $\varepsilon >0$. 
\end{theorem}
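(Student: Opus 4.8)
The plan is to construct explicit radial solutions on $B_1 \subset \mathbb{R}^n$ by a reduction to an ODE, exploiting the fact that for a radial function $u(x) = \phi(r)$ with $r = |x|$, the Hessian eigenvalues are $\phi''(r)$ (once, in the radial direction) and $\phi'(r)/r$ (with multiplicity $n-1$, in the spherical directions). Then the Lagrangian phase becomes $\Theta(r) = \arctan \phi''(r) + (n-1)\arctan(\phi'(r)/r)$, and the induced metric is diagonal in polar coordinates with radial component $1 + \phi''(r)^2$ and spherical component $1 + (\phi'(r)/r)^2$. The Hamiltonian stationary equation $\Delta_g \Theta = 0$ then reduces to a second-order ODE for $\Theta$ as a function of $r$, namely $\frac{d}{dr}\left( r^{n-1}\sqrt{g}\, g^{rr}\, \Theta'(r)\right) = 0$ where $\sqrt{g}\, g^{rr} = \sqrt{\frac{1+(\phi'/r)^2}{1+\phi''^2}}\,\bigl(1+(\phi'/r)^2\bigr)^{(n-2)/2}$.

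**Next I would** choose the simplest nontrivial phase profile: take $\Theta$ to be constant along the solution — no, that gives special Lagrangian; instead one wants $\Theta$ genuinely $g$-harmonic but non-constant. A cleaner route, and the one I expect the authors take, is to prescribe $\phi$ (hence the geometry) and $\Theta$ simultaneously via a clever ansatz. Observe that near $r=0$ we want $u \in C^{1,1/(2k+1)}$ but no better, so $\phi'(r) \sim r^{1/(2k+1)}$, i.e. $\phi''(r) \sim r^{1/(2k+1) - 1} = r^{-2k/(2k+1)} \to \infty$. Thus one Hessian eigenvalue blows up to $+\infty$, forcing $\arctan \phi'' \to \pi/2$, while the other $n-1$ eigenvalues $\phi'/r \sim r^{-2k/(2k+1)} \to \infty$ as well, so each contributes $\to \pi/2$; hence $\Theta \to n\pi/2 > (n-1)\pi/2$, consistent with the hypercritical range claimed. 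The strategy is to pick $\phi'(r)$ of the form $r^{1/(2k+1)}$ times a smooth correction (or a polynomial in $r^{1/(2k+1)}$), plug into the reduced ODE, and solve for the remaining freedom so that $\Delta_g \Theta = 0$ holds exactly; equivalently, set $w(r) = r^{n-1}\sqrt{g}\,g^{rr}\Theta'(r)$ and require $w' = 0$, i.e. $w \equiv $ const, which is a first-order relation tying $\Theta'$ to $\phi$. One then integrates to recover $\Theta$ and checks it matches $\arctan\phi'' + (n-1)\arctan(\phi'/r)$; matching these is one algebraic constraint that pins down the exponent $1/(2k+1)$ as the only admissible choice (the odd denominator arising from requiring $\phi$ to be a genuine function, i.e., odd reflection / single-valuedness of $r^{1/(2k+1)}$ across $r=0$, or from the structure of the resulting ODE).

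**The main obstacle will be** verifying that the constructed $u^{(n,k)}$ is a genuine \emph{weak} solution in the sense of Definition~\ref{def1} despite the singularity at the origin: one must check $u \in W^{2,1}(B_1)$ (so $\phi'' \in L^1$ near $0$, which holds since $r^{-2k/(2k+1)} \cdot r^{n-1}$ is integrable for $n \geq 1$, and in fact $n\geq 3$ gives ample room), that $\sqrt{g} \in L^1$ (similar power-counting, using $\sqrt{g} \sim \phi'' \cdot (\phi'/r)^{n-1} \sim r^{-2k/(2k+1)\cdot n}$ times $r^{n-1}$, integrable when $2kn/(2k+1) < n$, i.e. always), that $\Theta \in W^{1,2}(B_1)$ (the delicate one: $\Theta' \sim$ ? near $0$ — since $\Theta \to n\pi/2$ and the approach is like a power of $r$, one needs $\Theta' \in L^2$, i.e. $|\Theta'|^2 r^{n-1}$ integrable), and finally the integral identity $\int \langle \nabla_g\Theta,\nabla_g\eta\rangle\, d(\mathcal{H}^n \llcorner L_u) = 0$ for all test $\eta$, which requires an integration-by-parts argument justifying that no boundary term appears at the origin — this is where the precise exponent $1/(2k+1)$ and the dimension restriction $n \geq 3$ earn their keep. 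I would handle this by a cutoff argument: multiply by a radial cutoff vanishing near $0$, integrate by parts using the ODE (which gives the bulk term zero exactly), and show the error term from the cutoff annulus $\{\varepsilon < r < 2\varepsilon\}$ tends to $0$ as $\varepsilon \to 0$ by the integrability estimates just established.

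**Finally I would** confirm the sharp Hölder regularity: by construction $\phi'(r) = r^{1/(2k+1)}(1 + O(r^{\text{something}}))$, so $u \in C^{1,1/(2k+1)}$ near the origin and the gradient is genuinely not better than $1/(2k+1)$-Hölder there (the leading term $r^{1/(2k+1)}$ is the obstruction), giving $u^{(n,k)} \in C^{1,1/(2k+1)} \setminus C^{1,1/(2k+1)+\varepsilon}$ for every $\varepsilon > 0$, and the phase satisfies $|\Theta| > (n-1)\pi/2$ throughout $B_1$ by continuity from the boundary value $n\pi/2$ at the origin and a monotonicity/range check on the ODE solution. Away from the origin $u$ is smooth by standard elliptic regularity for \eqref{hstat0} (or directly from the explicit formula), so the only issue is the isolated singular point, completing the construction.
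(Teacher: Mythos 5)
Your radial ansatz cannot work, and the obstruction is elementary once one writes out the reduced equation. For a radial potential $\phi(r)$ the Hamiltonian stationary equation does reduce, as you say, to $\partial_r\!\left(r^{n-1}\tfrac{(1+(\phi'/r)^2)^{(n-1)/2}}{\sqrt{1+\phi''^2}}\,\Theta'(r)\right)=0$, i.e.\ $r^{n-1}\tfrac{(1+(\phi'/r)^2)^{(n-1)/2}}{\sqrt{1+\phi''^2}}\Theta'(r)\equiv c$. Now plug in your proposed asymptotics $\phi'(r)\sim r^{1/(2k+1)}$: then $\phi'/r\sim\phi''\sim r^{-2k/(2k+1)}$ and $\Theta'\sim r^{-1/(2k+1)}$, so the left-hand side scales as $r^{\,n-1-\frac{2k(n-2)+1}{2k+1}}=r^{\frac{n+2k-2}{2k+1}}$, whose exponent is strictly positive for every $n\ge 3$, $k\ge 1$. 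Hence the left-hand side tends to $0$ as $r\to 0$, forcing $c=0$, hence $\Theta'\equiv 0$, hence $\Theta$ constant. The same conclusion holds if instead one tries a \emph{smooth} radial $\phi$ (then the left-hand side is $O(r^n)$, again forcing $c=0$). In other words, every radial Hamiltonian stationary gradient graph is automatically special Lagrangian, and the special Lagrangian ODE with constant phase $<n\pi/2$ is incompatible with $\phi'\sim r^{1/(2k+1)}$. There is no ``remaining freedom'' to solve for: once the radial ansatz is imposed, the equation collapses and your construction produces nothing new.

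The paper's construction sidesteps precisely this obstruction by dropping full rotational symmetry. One first builds a \emph{smooth} analytic solution $w$ to \eqref{hstat0} by Cauchy--Kovalevskaya, with Cauchy data prescribed on $\{x_n=0\}$ that is axisymmetric (invariant under rotations of $x'=(x_1,\dots,x_{n-1})$) but genuinely depends on $x_n$ via $w_{nn}(x',0)=-2|x'|^2$. This data is engineered so that $n-1$ eigenvalues of $D^2w$ attain a strict local maximum value of $1$ at the origin with deficit $\sim |x'|^{2k}+x_n^2$, while the $n$th eigenvalue stays at $0$. The singular solution $u^{(n,k)}$ is then obtained by an \emph{upward} rotation of $L_w$ by $\pi/4$ (Lemma~\ref{rotatedgraph}): the coordinate change $\bar x = x - Dw(x)$ has $|\bar x|\gtrsim |x|^{2k+1}$ near $0$, which is exactly what produces the $C^{1,\frac{1}{2k+1}}$ regularity and no better, and the phase $\bar\Theta=\Theta+n\tfrac{\pi}{4}$ lands in the hypercritical range. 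Note $\bar\Theta(0)=(2n-1)\tfrac{\pi}{4}$, not $n\tfrac{\pi}{2}$ as in your proposal: the construction deliberately keeps one eigenvalue at $0$ so that the rotated tangent plane is vertical in only $n-1$ directions, which is what makes the rotated submanifold a graph. The verification that $u^{(n,k)}$ is a weak solution (conditions $u\in W^{2,1}$, $\sqrt g\in L^1$, $\Theta\in W^{1,2}$) is done by a change-of-variables computation in $x$ rather than by an integration-by-parts/cutoff argument as you suggest, since after changing variables the integrands are bounded; the borderline integrability lives in the $\bar x$ variable and is quantified by Lemma~\ref{integral}. So the broad structure of your ``obstacle'' paragraph (power-counting near the singularity) is in the right spirit, but it is attached to an ansatz that cannot produce a Hamiltonian stationary solution at all.
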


\begin{remark}
    The above result shows that a convex solution $u\in C^{1, \frac{1}{3}}$ to the Hamiltonian stationary equation need not be any more regular. This is a notable departure from the theory of special Lagrangians, for which Chen-Shankar-Yuan \cite{CSY} established that convex viscosity solutions of the special Lagrangian equation must always be smooth. 
\end{remark}

\begin{remark}
    The singular solutions constructed in this work have gradient graphs that are smooth submanifolds of $\mathbb{C}^n$. It is an interesting question whether there exists a Hamiltonian stationary Lagrangian graph with a hypercritical phase or with a convex potential that has a genuine geometric singularity, i.e., a gradient graph with unbounded curvature. 
\end{remark}

 In the case when $\beta$ of Theorem \ref{Thm_reg_holder} is 1, i.e., $u\in C^{1,1}(B_1)$, we obtain a stronger regularity result.

\begin{theorem}\label{Thm_reg_Lipschitz}
Let  $u \in C^{1,1}$ be a weak solution of \eqref{hstat0} on the unit ball $B_1 \subset \mathbb{R}^n$. Then $u$ is smooth on some open set $U_0$ containing 
\[
  \left  \{\, x \in B_{\frac12} \: \vert \: |\Theta(x)| \geq (n-2)\frac{\pi}{2} \,\right \},
\]
and satisfies the estimate
\begin{equation*}
   \|u\|_{C^{k,\alpha}(B_r)} \;\leq\; C\big(\alpha, k, n, r, \|D^2 u\|_{L^\infty(B_{2r})}\big),
\end{equation*}
where $k \geq 2$ and $B_{2r} \subset U_0$.
\end{theorem}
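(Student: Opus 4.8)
The strategy is to exploit the hypothesis $u\in C^{1,1}$ in two ways. First, the induced metric $g=I_n+(D^2u)^2$ is then uniformly elliptic with $\sqrt g$ bounded above and below by constants depending only on $n$ and $\|D^2u\|_{L^\infty}$, so the Lagrangian phase $\Theta$ turns out to solve a \emph{linear} uniformly elliptic equation in divergence form. Second, a bound on the Hessian confines $D^2u$ to a fixed compact set, which is what will let us push the phase threshold down to the borderline value $(n-2)\tfrac\pi2$: the obstruction to the favorable (convexity) structure of the level sets of $\sum_i\arctan\lambda_i$ is a large-Hessian phenomenon, hence harmless here. Concretely, the first step is to take test functions $\eta(x,y)=\zeta(x)$ with $\zeta\in C_c^\infty(B_1)$ in Definition \ref{def1}(4) and use $\mathcal H^n\llcorner L_u=\sqrt g\,dx$ in the graph parametrization, so that the weak formulation reduces to $\int_{B_1}\sqrt g\,g^{ij}\,\partial_i\Theta\,\partial_j\zeta\,dx=0$; that is, $\Theta\in W^{1,2}(B_1)$ is a weak solution of $\partial_j(\sqrt g\,g^{ij}\,\partial_i\Theta)=0$. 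Since $1\le 1+\lambda_i^2\le 1+\|D^2u\|_{L^\infty}^2$, the matrix $\sqrt g\,g^{-1}$ is bounded and uniformly elliptic, so De Giorgi–Nash–Moser yields $\Theta\in C^{0,\alpha_0}_{\mathrm{loc}}(B_1)$ with an interior estimate, for some $\alpha_0$ depending only on $n$ and $\|D^2u\|_{L^\infty}$.

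Next I would pass to a variable-phase special Lagrangian problem. On the set where $|\Theta|$ exceeds the borderline value, $u$ solves $\sum_i\arctan\lambda_i(D^2u)=\Theta(x)$ with the \emph{known} right-hand side $\Theta\in C^{0,\alpha_0}$, and replacing $u$ by $-u$ we may take the phase positive. The key local estimate to establish is: if $\|D^2u\|_{L^\infty(B_{2r}(y))}\le m$ and $\Theta\ge (n-2)\tfrac\pi2-\tau$ on $B_{2r}(y)$, where $\tau=\tau(n,m)\ge 0$ is a suitable threshold, then $\|u\|_{C^{2,\alpha}(B_r(y))}\le C(n,r,m,\|\Theta\|_{C^{0,\alpha_0}(B_{2r}(y))})$. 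Granting this, set $U_0$ to be the union of all balls $B_r(y)$ with $\overline{B_{2r}(y)}\subset B_{3/4}$ meeting the hypotheses of the estimate; then $U_0$ is open, and because $\tau>0$ every $x_0\in B_{1/2}$ with $|\Theta(x_0)|\ge (n-2)\tfrac\pi2$ lies strictly inside such a ball, so $\{x\in B_{1/2}:|\Theta(x)|\ge (n-2)\tfrac\pi2\}\subset U_0$.

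The crux is the proof of that local $C^{2,\alpha}$ estimate, and I expect it to be the main obstacle. Because $\|D^2u\|_{L^\infty}\le m$, the linearization $\partial\Theta/\partial u_{ij}$ has eigenvalues $1/(1+\lambda_i^2)\in[(1+m^2)^{-1},1]$, so the equation is uniformly elliptic; and the weak supercriticality $|\Theta|\ge (n-2)\tfrac\pi2$ forces at most one eigenvalue of $D^2u$ to be negative, which together with the Hessian bound makes $u$ semiconvex and keeps $D^2u$ in a fixed compact region of the supercritical cone, on which the level sets $\{\sum_i\arctan\lambda_i=c\}$ are convex. These are exactly the structural features used in the proof of Theorem \ref{Thm_reg_holder}: indeed, at a strictly supercritical point $x_0$ one can already localize that theorem to $u\in C^{1,1/2}$ on a small ball where $|\Theta|\ge (n-2)\tfrac\pi2+\delta$ and conclude smoothness near $x_0$. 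The new point is the borderline case, where the compactness forced by the $C^{1,1}$ bound is what permits the threshold $\tau(n,m)$ to be taken strictly positive, so that a ball on which $\Theta$ merely touches $(n-2)\tfrac\pi2$ is still admissible; establishing $C^{2,\alpha}$ there under only one-sided control of the Hessian, a Hölder right-hand side, and the borderline phase is the heart of the matter.

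Finally, once $u\in C^{2,\alpha}(B_r(y))$ I would bootstrap by alternating Schauder estimates between the two equations. The coefficients $\sqrt g\,g^{ij}$ of the equation from the first step are then $C^{0,\alpha}$, so Schauder upgrades $\Theta$ to $C^{1,\alpha}$; feeding this back as the right-hand side of the special Lagrangian equation and differentiating once, $\partial_e u$ satisfies $\big(\partial\Theta/\partial u_{ij}\big)(D^2u)\,\partial_{ij}(\partial_e u)=\partial_e\Theta$ with $C^{0,\alpha}$ coefficients and $C^{0,\alpha}$ right-hand side, so $u\in C^{3,\alpha}$. Iterating — at stage $\ell$, $u\in C^{\ell,\alpha}$ gives $g\in C^{\ell-2,\alpha}$, hence $\Theta\in C^{\ell-1,\alpha}$ by Schauder, hence $u\in C^{\ell+1,\alpha}$ after differentiating the special Lagrangian equation $\ell-1$ times and applying linear Schauder — shows $u\in C^\infty$ on $B_{r/2}(y)$. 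Running this along a nested family of balls between $B_r$ and $B_{2r}$ and tracking constants through all of the above, every bound depends only on $\alpha$, $k$, $n$, $r$, and $\|D^2u\|_{L^\infty(B_{2r})}$, which is the asserted estimate.
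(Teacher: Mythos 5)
Your first step (deriving $\int_{B_1}\sqrt g\,g^{ij}\,\partial_i\Theta\,\partial_j\zeta\,dx=0$ from the weak formulation and invoking De Giorgi--Nash to get $\Theta\in C^{0,\alpha_0}_{\mathrm{loc}}$) and your final bootstrap are both correct and match the paper. The gap lies in the middle, precisely where you flag it: the claimed local $C^{2,\alpha}$ estimate for $\sum_i\arctan\lambda_i(D^2u)=\Theta(x)$ under the hypotheses $\|D^2u\|_{L^\infty}\le m$ and $\Theta\ge (n-2)\tfrac\pi2-\tau$ with $\tau=\tau(n,m)>0$. You do not prove this estimate, and the heuristic you offer for why it should hold does not survive scrutiny. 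The assertion that ``the obstruction to the favorable (convexity) structure of the level sets of $\sum_i\arctan\lambda_i$ is a large-Hessian phenomenon, hence harmless here'' is wrong: by Yuan's characterization, the level set $\{\lambda:\sum_i\arctan\lambda_i=c\}$ is concave \emph{precisely} when $c\ge(n-2)\tfrac\pi2$, and for $c$ slightly below that threshold the loss of concavity occurs at a bounded point on the level set, not at infinity. Restricting $\lambda$ to $|\lambda|\le m$ does not restore concavity. Consequently ``$D^2u$ in a fixed compact region of the supercritical cone, on which the level sets are convex'' is not a region that is accessible once one allows $\Theta<(n-2)\tfrac\pi2$, and the standard concave modification (Chen--Warren, CPW) plus Caffarelli--Cabr\'e does not apply. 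What your argument actually yields for $\tau>0$ is semi-convexity of $u$ (a lower bound on $\lambda_1$ depending on $n$, $m$, $\tau$), but semi-convexity alone does not give an Evans--Krylov type $C^{2,\alpha}$ estimate when the operator's level sets are genuinely non-concave. You would need to cite or prove a $C^{2,\alpha}$ theorem for $C^{1,1}$ solutions of fully nonlinear equations whose level sets are only \emph{approximately} concave, and no such ingredient appears in the proposal.

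The paper circumvents exactly this difficulty by a different mechanism: an upward rotation of the gradient graph by a small angle $\gamma$ determined by $K=\|D^2u\|_{L^\infty}$. The rotation preserves graphicality (this is Proposition~\ref{upward}, whose proof uses only that $w=u-\tfrac K2|x|^2$ is concave), produces a new potential $\bar u\in C^{1,1}$, and \emph{raises} the Lagrangian phase by $n\gamma$: $\bar\Theta=\Theta+n\gamma$. Thus on the set where $\Theta\ge(n-2)\tfrac\pi2-(n-1)\gamma$ the rotated phase is \emph{strictly} supercritical, $\bar\Theta\ge(n-2)\tfrac\pi2+\gamma$, so the level sets for $\bar u$ are uniformly concave, the concave modification is legitimate, and Caffarelli--Cabr\'e gives $\bar u\in C^{2,\alpha}$; rotating back transfers the $C^{2,\alpha}$ estimate to $u$. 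The open set $U_0$ in the theorem statement then appears naturally, with strict containment of $\{|\Theta|\ge(n-2)\tfrac\pi2\}$ because of the extra room $n\gamma$. This rotation step is the key idea your proposal is missing; without it, or without a genuinely new near-critical $C^{2,\alpha}$ estimate, the middle of the argument does not close.
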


We emphasize that we do not assume that $|\Theta| \geq (n-2)\frac{\pi}{2}$ holds everywhere in $B_1$. Instead, we show that our result holds on an open set $U_0$ that contains
$\{\, x \in B_{\frac12} \: \vert \: |\Theta(x)| \geq (n-2)\frac{\pi}{2}\,\}$. We elaborate on this point further in the discussion in Section \ref{disc}.
\begin{remark}
    When $n\leq 4$, any $C^{1,1}$ solution of \eqref{hstat0} is smooth as shown by Bhattacharya in \cite{AB2025}. When $n > 4$, let $\lambda_{\min}$ and $\lambda_{\max}$ denote the smallest and largest eigenvalues of the Hessian. If these satisfy $\arctan(\lambda_{\min}) > (\Theta-\pi)/n$ or $\arctan(\lambda_{\max}) < (\Theta+\pi)/n$, then the conclusion of Theorem \ref{Thm_reg_Lipschitz} follows  from using a rigidity result of Ogden-Yuan \cite[Corollary 1.1]{OY} and \cite{AB2025}.

    It is worth noting that in higher codimensions Lawson-Osserman \cite{LO} constructed examples of Lipschitz minimal submanifolds, including graphical ones, that fail to be $C^{1}$. Combined with the absence of a rigidity result for special Lagrangian cones in dimensions $n>4$, this indicates that imposing restrictions on the phase, or on the minimal or maximal eigenvalues of the Hessian of $u$, may be essential in establishing higher regularity for Lipschitz continuous Hamiltonian stationary Lagrangian graphs.
\end{remark}

\begin{remark}
    We would also like to point out that without the condition $|\Theta| \geq (n-2)\frac{\pi}{2}+\delta$, the result of Theorem \ref{Thm_reg_holder} remains valid provided that one assumes $\arctan(\lambda_{\min}) > (\Theta-\pi)/n+\delta$. This follows from \cite{OY} and \cite{AB2025}. 
\end{remark}

To highlight the distinction between our results and those established in special Lagrangian geometry, we first briefly describe how the range of the Lagrangian phase, $\Theta$, affects the concavity properties of the arctangent operator, which plays a key role in regularity. The phase $(n-2)\frac{\pi}{2}$ is referred to as critical: indeed, the level set $\{ \lambda=(\lambda_1,..,\lambda_n) \in \mathbb{R}^n \: \vert \:  \Theta=\sum_{i=1}^n \arctan \lambda_i\}$ is concave precisely when $|\Theta|\geq (n-2)\frac{\pi}{2}$ \cite[Lemma 2.2]{YY06}. When $|\Theta|\geq (n-1)\frac{\pi}{2}$, the phase is termed hypercritical. In this range, the concavity of the level set is immediate, as the condition enforces $\lambda > 0$, making the arctangent operator concave. Finally, when $|\Theta|\geq (n-2)\frac{\pi}{2}+\delta$, the phase is called supercritical. In this range, one can show that the Hessian of the potential $u$ admits a uniform negative lower bound dependent on $\delta$; as a consequence, the potential is semi-convex.

For the special Lagrangian equation \eqref{slag} with phase $|\Theta|\geq (n-2)\frac{\pi}{2}$, Warren-Yuan \cite{WY9,WY} and Wang-Yuan \cite{WaY} established Hessian estimates showing that even $C^{0}$ viscosity solutions are in fact analytic. When passing to the Hamiltonian stationary setting, our results reveal the following:  Theorem \ref{Thm_reg_holder} demonstrates that one can still expect full regularity for $C^{1,\frac{1}{3}+}$ solutions to \eqref{hstat0} provided the phase satisfies $|\Theta|\geq (n-2)\frac{\pi}{2}+\delta$. However, Theorem \ref{Thm_sing_soln} shows that singular $C^{1,\frac{1}{3}}$ solutions to \eqref{hstat0} exist even under the strongest convexity assumption on the phase $|\Theta|> (n-1)\frac{\pi}{2}$. This stands in sharp contrast with the special Lagrangian equation, where singular solutions arise only in the subcritical range $|\Theta|<(n-2)\frac{\pi}{2}$, that is, when the arctangent operator loses all convexity properties. In this case, Nadirashvili-Vl\u{a}du\c{t} \cite{NV} constructed $C^{1,\frac{1}{3}}$ solutions in dimension three, while Wang-Yuan \cite{WangY} proved the existence of solutions $u \in C^{1,\frac{1}{2m+1}} \setminus C^{1,\frac{1}{2m+1}+\varepsilon}$ for every integer $m \geq 1$ in dimension three.

Hamiltonian stationary submanifolds can be viewed as a natural extension of minimal Lagrangian submanifolds in Kähler-Einstein manifolds; in particular, they include special Lagrangians arising in Calabi-Yau manifolds. The existence and stability problem has been studied by
many authors via different approaches (cf. \cite{MR1062973}, \cite{MR1611051},
\cite{SWJDG}, \cite{HeleinRomon2002}, \cite{MR1986315}, \cite{HeleinRomon2005}, \cite{JLS}, and references therein). Every special Lagrangian submanifold is Hamiltonian stationary. However, the converse does not hold: for example, the Clifford torus in $\mathbb{C}^2$ is Hamiltonian stationary but not special Lagrangian. Moreover, there exist non-flat cones that are Hamiltonian stationary without being special Lagrangian (see the work of Schoen-Wolfson \cite{SW03}). For a recent survey of progress on regularity theory for special Lagrangian and Hamiltonian stationary equations, we refer the reader to \cite{chen2022regularity}.

Another source of difficulty in the study of Hamiltonian stationary submanifolds arises from the fact that the maximum principle, a key tool used in the study of minimal surface equations, does not apply to the fourth order equations governing these submanifolds. Moreover, even in the simplest setting of $\mathbb{C}^2$ equipped with the standard symplectic form, Hamiltonian stationary submanifolds lack a monotonicity property (see the work of Minicozzi \cite{minicozzi1995willmore} and Schoen-Wolfson \cite{SWJDG}), creating significant challenges in understanding singularities. Recent progress in this direction can be found in the works of Pigati-Rivi\`ere \cite{pigati2024variational} and Orriols \cite{orriols2024existence}.

In two dimensions, Schoen-Wolfson \cite{SW03} established regularity results in the setting of general Kähler manifolds, where singularities are known to occur; such singularities are non-graphical (see \cite[Section 7]{SWJDG}).
 In the Euclidean setting, they showed that any $C^{2,\alpha}$ solution of \eqref{hstat0} is smooth \cite[Proposition 4.6]{SWJDG}. In \cite{ChenWarren}, Chen-Warren proved that weak  $C^{1,1}$ solutions
to  \eqref{hstat0} are smooth as long as one of these conditions is met: The Lagrangian phase, $\Theta$, satisfies $|\Theta|\geq (n-2)\frac{\pi}{2}+\delta$; the potential $u$ is strongly convex; or the Hessian of $u$ is bounded by a constant sufficiently less than 1. Smoothness of $C^{1,1}$ solutions to \eqref{hstat0} without restrictions on the phase or the Hessian was shown in two dimensions by Bhattacharya-Warren \cite{BW2} and in dimensions up to four by Bhattacharya \cite{AB2025}. The smoothness of $C^1$ regular Hamiltonian stationary submanifolds of a symplectic manifold was established by Bhattacharya-Chen-Warren \cite{BCW} through an analysis of the double divergence form of the fourth order nonlinear scalar equation arising from studying the stationary points of
the volume of $L_u=(x,Du(x))$ in an open ball $B\subset\mathbb{R}^{2n}$ equipped
with a Riemannian metric among nearby competing gradient graphs $L_{t}=(x, Du(x)
+t D\eta(x))$ for compactly supported smooth functions $\eta$. This fourth order double divergence form of the Hamiltonian stationary equation was also studied by Bhattacharya-Skorobogatova \cite{BhaSko} to prove a partial regularity result for Lipschitz continuous Hamiltonian stationary Lagrangian graphs in all dimensions. For results on the compactness of Hamiltonian stationary submanifolds in $\mathbb{C}^n$ and more general symplectic manifolds, based on curvature and smoothness estimates derived from the regularity theory of the governing fourth order equations, we refer the reader to the works of Chen-Warren \cite{chen2024compactification} and Chen-Ma \cite{chen2024compactness}.

\medskip
The regularity of Hölder continuous Hamiltonian stationary Lagrangian graphs has long been an open problem. By developing new techniques, inspired in part by the methods of Wang-Yuan \cite{WangY}, we resolve this question. Let us now provide a brief overview of the main ideas used to prove our results. Note that throughout this paper, we assume $\Theta > 0$, since the case $\Theta < 0$ can be handled analogously by symmetry. 

\subsection{Main ideas: H\"older continuous HSL graphs} To prove Theorem \ref{Thm_reg_holder}, we perform a downward rotation of the Hamiltonian stationary Lagrangian graph with potential $u\in C^{1, \frac{1}{3}+}$, which is based on the rotation developed by Yuan \cite{YY02}.
We perform the rotation in such a way that the rotated submanifold is another Hamiltonian stationary Lagrangian graph, which we show to be smooth via a result of Chen-Warren \cite{ChenWarren}. 
Using the fact that smooth graphs lie within a distance strictly less than $Cr^2$ from their tangent planes in a small ball of radius $r$ about the tangent point, we perform a careful upward rotation of the rotated gradient graph on a ball of radius $\frac{r}{2}$. Under the assumption that the original potential is not in $C^{1,1}$, one then obtains a point at which the tangent plane to this gradient graph is vertical. However, we prove that this contradicts the Hölder continuity of the original Hamiltonian stationary Lagrangian graph for exponents larger than $\frac{1}{3}$.

\subsection{Main ideas: singular solutions} 
Our proof of Theorem \ref{Thm_sing_soln}, which constructs singular solutions $u \in  C^{1, \frac 1{2k+1}} \setminus C^{1,\frac1{2k+1} + \varepsilon }$ to \eqref{hstat0}, draws inspiration from the construction of singular solutions to \eqref{slag} by Nadirashvili-Vl\u{a}du\c{t} \cite{NV} and Wang-Yuan \cite{WangY}. The idea behind our proof is the following: the first step is to build an analytic solution on $\R^n$ with carefully chosen properties, using the Cauchy-Kovalevskaya Theorem. To generate a singularity, we design the solution so that its graph can be rotated in such a way that the tangent plane at one point becomes vertical, while still remaining a graph. This is achieved by prescribing Cauchy data so that the largest eigenvalue of the Hessian attains a strict local maximum at the origin. We use rotationally symmetric Cauchy data to simplify the analysis of the largest eigenvalue. Similar rotation techniques were recently used by Bhattacharya-Shankar \cite{BS1} to construct singular solutions of the Lagrangian mean curvature flow, and by Mooney-Savin \cite{MooneySavin24} to construct new singular solutions of the minimal surface system.

\subsection{Main ideas: Lipschitz regular HSL graphs}  

We prove Theorem \ref{Thm_reg_Lipschitz} by introducing an upward rotation that rotates a Lipschitz continuous gradient graph $L_u = (x, Du(x))$ by a small angle, $\gamma$, so that, in the new coordinates $(\bar{x}, \bar{y})$, it remains a Lipschitz continuous gradient graph of a potential $\bar{u}(\bar{x})$, whose phase lies in the supercritical range. This method requires only that $u \in C^{1,1}$, without requiring convexity constraints on $u$ or that $u$ solves any equation. In the rotated coordinates, the arctangent operator can be modified to a concave one, which yields higher regularity for $\bar{u}$. Rotating back to the original coordinates then implies smoothness of the original potential.

\subsubsection{Discussion} \label{disc} We explain that our argument used to prove Theorem \ref{Thm_reg_Lipschitz} does not rely on the assumption $\Theta \geq (n-2)\frac{\pi}{2}$ throughout $B_1$. Rather, the result holds on an open set $U_0$ that contains
$\{\, x \in B_1 \:\vert\:  \Theta(x) \geq (n-2)\frac{\pi}{2} \,\}$. To see why, suppose that $\Theta \geq (n-2)\frac{\pi}{2}$ everywhere. The strong minimum principle then tells us that $\Theta$ can reach the critical phase only on the boundary of $B_1$, unless $u$ itself is a special Lagrangian solution. In this situation, the Harnack inequality provides a quantitative strengthening: inside a slightly smaller ball $B_{0.9}$ we actually obtain $\Theta > (n-2)\frac{\pi}{2} + \delta$. At this stage, \cite[Theorem 1.2]{ChenWarren} applies directly, which shows that the desired regularity follows even without using the upward rotation argument.

On the other hand, one advantage of working with $C^{1,1}$ regularity is that it already suffices to carry out the upward rotation. This allows us to treat solutions whose phase may in fact cross the critical value. The effect of the upward rotation is to increase the phase by $n\gamma$. As a result, the rotated Hamiltonian stationary Lagrangian graph is smooth wherever $\Theta + n\gamma \geq (n-2)\frac{\pi}{2} + \delta$, so the original graph was smooth wherever $\Theta \geq (n-2) \frac \pi 2 - n \gamma + \delta $.

\subsection*{Organization}The organization of the paper is as follows. In Section \ref{Sec_Holder}, we present a detailed study of the geometric properties of Hölder continuous Hamiltonian stationary Lagrangian graphs and prove Theorem \ref{Thm_reg_holder}. In Section \ref{Sec_singsoln}, we show the sharpness of the result in Theorem \ref{Thm_reg_holder} by constructing singular solutions to \eqref{hstat0}, proving Theorem \ref{Thm_sing_soln}. In Section \ref{Sec_Lipschitz}, we describe an upward rotation technique, which we use to prove Theorem \ref{Thm_reg_Lipschitz}.

\medskip

 \subsection*{Acknowledgments} The authors thank Ravi Shankar, Micah Warren, and Yu Yuan for helpful comments. 
AB acknowledges
the support of NSF grant DMS-2350290, the Simons Foundation grant MPS-TSM-00002933, and a Bill Guthridge fellowship from UNC-Chapel Hill. WJO acknowledges the support of the NSF Graduate Research Fellowship Program under grant DGE-2140004 and NSF grant DMS-2453862.

\section{Regularity for H\"older continuous HSL graphs}
\label{Sec_Holder}

In this section, we prove Theorem \ref{Thm_reg_holder}.
First, we clarify some notations.

\subsection*{Notation}Throughout this paper, we use $B_r$ to denote a ball of radius $r$ centered at the origin unless specified otherwise. We use $D$ to denote the derivative with respect to the ambient Euclidean metric, and $\nabla_g$ to denote the covariant derivative on the Lagrangian submanifold, with induced metric $g$.

\medskip

 We start by deriving properties of graphical smooth submanifolds of $\mathbb{R}^n\times \R^m$.

    \begin{lemma} \label{HolderImprovement}
Suppose that $F: \Omega  \subset  \R^n \to \R^m $ is $C^{\beta}$ where $\beta > \frac{1}{l+1}$ and $l \geq 1$ is an integer. If the graph of $F$ is a smooth submanifold of $\R^n \times \R^m$, then $F \in C^{\frac 1 l}_{\mathrm{loc}}( \Omega) .$ 
\end{lemma}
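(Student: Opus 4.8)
The key insight is that a smooth graph stays close to its tangent plane at a quadratic rate, and combining this with the Hölder bound gives a bootstrap on the pointwise oscillation of $F$.

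Let me set up the core estimate. Since the graph of $F$ is a smooth submanifold, near any point $p = (x_0, F(x_0))$ the graph is, after an appropriate rigid rotation of $\R^n \times \R^m$, the graph of a smooth function over its tangent plane $T_p$. In these rotated coordinates, the graph lies within distance $C\rho^2$ of $T_p$ on a ball of radius $\rho$ about $p$ (with $C$ controlled by the second fundamental form, locally bounded). I want to translate this into a statement about $F$ in the original coordinates. The tangent plane $T_p$ is the graph of the affine map $x \mapsto F(x_0) + DF(x_0)(x - x_0)$ — wait, but $F$ is only assumed $C^\beta$, so $DF(x_0)$ need not exist a priori. This is the subtlety: smoothness of the *graph* does not immediately give differentiability of $F$. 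However, the tangent plane $T_p$ to the smooth graph at $p$ does exist; the only issue is whether $T_p$ projects diffeomorphically onto the $\R^n$ factor. I would argue that for $x$ near $x_0$, the point $(x, F(x))$ on the graph lies within $C|x-x_0|^\beta$ of $p$ in $\R^n \times \R^m$ (by the Hölder bound), hence within the small ball where the tangent-plane approximation holds, giving
\[
\mathrm{dist}\big((x,F(x)),\, T_p\big) \le C |x - x_0|^{2\beta}.
\]

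Now I would argue by contradiction / iteration. Suppose $T_p$ is the graph of an affine function $A_p(x) = a_p + L_p(x - x_0)$ over $\R^n$ with $|L_p|$ locally bounded (this boundedness is where the Hölder hypothesis $\beta > \tfrac{1}{l+1} \ge \tfrac12$ for $l=1$, or more carefully a separate argument, is needed — the tangent plane cannot become vertical because that would force $F$ to have oscillation growing faster than $|x-x_0|^\beta$, contradicting $C^\beta$). Then the displayed distance bound gives
\[
|F(x) - a_p - L_p(x - x_0)| \le C|x - x_0|^{2\beta}
\]
after accounting for the tilt of $T_p$ (harmless since $|L_p|$ is bounded). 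The plan is then to run this as a Campanato-type / Morrey-space iteration: at scale $\rho$, $F$ is within $C\rho^{2\beta}$ of an affine function; feeding this back in at the next dyadic scale improves the exponent from $\beta$ to $\min(2\beta, 1)$... but actually the cleaner route, mirroring Wang–Yuan, is to observe that one can iterate the tangent-plane closeness across a chain of scales to directly bound $|F(x) - F(x_0)|$: summing the quadratic-in-scale errors along a geometric sequence of radii $\rho_j = 2^{-j}|x-x_0|^{\theta}$ for the right $\theta$, and using that the Hölder exponent $\beta > \tfrac1{l+1}$ means $(l+1)\beta > 1$, one extracts a bound $|F(x) - F(x_0) - L_{x_0}(x-x_0)| \le C|x-x_0|^{1/l}$, i.e. $F \in C^{1/l}_{\mathrm{loc}}$ — and in fact, since $1/l \ge 1$ is impossible for $l = 1$ unless we interpret $C^{1}$, for $l=1$ the conclusion is $F \in C^{1}_{\mathrm{loc}}$ with the affine approximation $L_{x_0} = DF(x_0)$ existing.

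The main obstacle, and the step I would spend the most care on, is establishing that the tangent plane $T_p$ remains non-vertical with a *locally uniform* bound on its slope $|L_p|$ — equivalently, that the smooth graph, which near $p$ is a graph over $T_p$, is also locally a graph over the $\R^n$ factor with uniformly bounded gradient. Heuristically: if $T_{p_0}$ were vertical (or near-vertical) at some $p_0 = (x_0, F(x_0))$, then moving a small distance along the graph in a direction where $T_{p_0}$ has a large vertical component would change the $\R^m$-coordinate by $\gtrsim |x - x_0|$ while changing the $\R^n$-coordinate by only $o(|x-x_0|)$ — but the Hölder estimate controls $|F(x) - F(x_0)| \le C|x - x_0|^\beta$, which *permits* this since $\beta < 1$. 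So I need to be more careful: the correct argument is quantitative and scale-by-scale. At scale $\rho$, the graph over $T_p$ has height $\le C\rho^2$; if the slope of $T_p$ relative to $\R^n$ is large, say of size $M$, then over an $\R^n$-ball of radius $r$ the graph of $F$ has oscillation $\gtrsim Mr - C(r/1)^{2}\cdot(\text{stuff})$, forcing $M r \lesssim r^\beta$, i.e. $M \lesssim r^{\beta - 1} \to \infty$ as $r \to 0$ — that gives no contradiction either. The resolution (following the philosophy of the paper's "downward/upward rotation" remarks) is that one does not need $T_p$ non-vertical for *all* $p$; rather one performs the tangent-plane-closeness iteration and shows that the *only* way it can fail to produce $C^{1/l}$ regularity is if a vertical tangent plane appears, and then a separate rotation argument (rotating the graph slightly) rules that out by contradicting $C^\beta$ with $\beta > \tfrac{1}{l+1}$ directly — this last contradiction being exactly the mechanism flagged in the introduction's "Main ideas" subsection for Theorem \ref{Thm_reg_holder}. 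I would structure the proof of this lemma to isolate that dichotomy cleanly: either the iteration closes and gives $C^{1/l}_{\mathrm{loc}}$, or a vertical tangent plane exists and a short rotation argument derives a contradiction with the hypothesis $\beta > \tfrac{1}{l+1}$.
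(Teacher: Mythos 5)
Your proposal correctly identifies the central obstruction --- the possibility that the smooth graph has a vertical tangent plane at the putative bad point --- but it does not supply the mechanism the paper actually uses to close the argument there, and the Campanato-type iteration you sketch instead does not go through (as you yourself observe when you conclude ``that gives no contradiction either''). The proposal ends by deferring the hard step to an unspecified ``short rotation argument,'' but no rotation of the graph is what is needed for this lemma; what is needed is a \emph{local} analysis of the graph at the vertical point.

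The paper's resolution is more direct than either of your two attempted routes. Assuming $F\notin C^{1/l}$ on a compact set, one obtains a point where the tangent plane $P$ to the (smooth) graph is vertical, i.e.\ $\ker \pi_x|_P$ is nontrivial. One then rotates only the \emph{coordinate axes} (not the graph) so that $\{\partial_{y_1},\dots,\partial_{y_k}\}$ spans $\ker \pi_x|_P$ and $\{\partial_{x_{k+1}},\dots,\partial_{x_n}\}$ spans $\pi_x(P)$, and invokes the Implicit Function Theorem to parametrize the graph smoothly as $G(y',x'')$ over $P$, with $D_{y'}G(0)=0$. For each unit tangent direction $e=e_x+e_y$, one traces the curve $t\mapsto(te,G(te))$ and studies the smooth functions $t\mapsto x(t)$, $t\mapsto y(t)$. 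When $|e_y|\ge\tfrac12$, one has $|y(t)|\sim|t|$, while Taylor's theorem gives $|x(t)|\lesssim|t|^j$ with $j$ the vanishing order along $e$. If $j>l$, the chain $|t|\lesssim|y(t)|\lesssim|x(t)|^\beta\lesssim|t|^{(l+1)\beta}$ contradicts $(l+1)\beta>1$; hence $j\le l$, and then $|y(t)|^l\lesssim|t|^l\lesssim|x(t)|$, which together with the analogous bound for $|e_x|\ge\tfrac12$ contradicts the blow-up of $|y|/|x|^{1/l}$. This Taylor-along-tangent-curves step is the entire content of the lemma, and it is absent from your proposal; without it, the ``dichotomy'' you state is not a proof but a restatement of the problem. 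In particular, note that the parametrization over $P$ and Taylor's theorem replace both the Campanato iteration and any rotation of the graph, and they work precisely \emph{because} the tangent plane is vertical, rather than in spite of it.
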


\begin{proof} Let us fix $K \Subset \Omega$. For the sake of contradiction, suppose that $F \notin C^{\frac 1 l } (K).$ Then there exists a sequence $\{x^{(i)}\}_{i=1}^\infty\subset K $ such that $x^{(i)} \to x \in K$ and 
$$ \limsup \frac{ |F(x^{(i)} )-F(x) |}{|x^{(i)} - x| ^{\frac1l}}   = \infty.$$
Without loss of generality, we assume that $x=0$ and $F(0) =0$.
Since the graph of $F$ is a smooth submanifold, the above necessarily implies that the graph of $F$ has a vertical tangent plane at $0$. Let $P$ denote this tangent plane and $\pi_x$ be the projection onto the $x$-plane. After rotations of the $x$ and $y$ coordinates, suppose that $\{\partial_{y_1}, \dots, \partial_{y_k}\}$ forms a basis for $\ker \pi_x|_P$ and $\{\partial_{x_{k+1}}, \dots, \partial _{x_n}\}$ forms a basis for $\pi_x ( P) .$ By the Implicit Function Theorem, the graph of $F$ can be represented as a graph of a smooth function $G(y_1, \dots, y_k, x_{k+1}, \dots , x_n ).$ 
We write \[x = ( x^\prime , x^{\prime \prime } ) \text{ and } y = ( y^\prime , y^{\prime \prime } )\] where $x^\prime =  (x_1, \dots, x_k)$, $x^{\prime \prime } = ( x_{k+1}, \dots, x_n)$, $y^\prime =  (y_1, \dots, y_k)$, and $y^{\prime \prime } = ( y_{k+1}, \dots, y_m)$ (note, it is possible that $k=m$ or $k=n$). 
Thus the graph of $F$ is locally the set
$$ \{ (x^\prime , x^{\prime \prime} , y^\prime , y^{\prime   \prime } ) \: | \: ( x^\prime, y^{\prime \prime })= G ( y^\prime , x^{\prime \prime } ) \} .$$
Now since the vectors $\partial_{y_1}, \dots, \partial_{y_k}$ are tangent to the graph of $F$, we get $D_{y^\prime } G(0) =0.$

We fix a unit vector $e$ in the $(y^\prime , x^{\prime \prime } ) $-plane and consider the curve $(te, G(te))$ for sufficiently small values of $t$. Next decompose $e$ into $x$ and $y$ components, $e = e_x + e_y$. Then we get
\[|e_y||t| \leq |y(t)|.\]

Assume $|e_y| \geq \frac 12.$ This yields $|y(t)| \sim |t|$. By the assumed H\"older continuity of $F$, we have
$$  |y(t) | \lesssim  | x(t)|^\beta .$$
By Taylor's Theorem, we get $|x(t) | \lesssim |t|^j$ for some integer $j$, which may depend on $e$. Let's suppose that, for some choice of $e$, $j> l$. Then we get
$$  |t| \lesssim |y(t) | \lesssim|x(t)|^\beta \lesssim   |t|^{(l+1) \beta }, $$
but this yields a contradiction since $(l+1) \beta >1.$ 

Therefore we must have $$|y(t)|^l \lesssim |t|^l \lesssim |x(t)|.$$ On the other hand, if $|e_x| > \frac 12$, then $|x(t) | \sim |t|$ and $|y(t) | \lesssim t $, which gives 
$$ |y(t)| ^l \lesssim |t|^l \lesssim |x(t)|.$$ 

This contradicts the assertion that 
$$ \frac{| y|}{|x|^{\frac 1 l } }  \to \infty $$
as $x \to 0$. Thus $F \in C^{ \frac 1 l } ( K) .$
\end{proof} 

Next, we establish smoothness of a Hölder continuous Hamiltonian stationary Lagrangian graph when the Hölder exponent exceeds $\frac{1}{2}$ and the phase satisfies $\Theta\geq(n-2)\frac{\pi}{2}+\delta$. The significance of this supercriticality of phase is that it permits a downward rotation of the gradient graph, under which the rotated submanifold can be shown to be smooth. By subsequently performing an upward rotation of the new submanifold, one recovers the smoothness of the original submanifold.  The method of performing such downward rotations for gradient graphs of $C^{1,1}$ functions with supercritical phase is explained in detail in Section 4 of \cite{ChenWarren}. In comparison, Section \ref{Sec_up_rot} of the present paper develops the corresponding upward rotation, established under considerably weaker assumptions on the phase. 
\begin{proposition} \label{Holder1/2to1}

Let $u \in C^{1, \beta}$ be a weak solution of \eqref{hstat0} on the unit ball $B_1\subset \mathbb{R}^n$ with $\beta \in (\frac12,1) $ and  $\Theta \geq (n-2)\frac{\pi}{2}+\delta$. Then $u$ is smooth inside $B_1$.
\end{proposition}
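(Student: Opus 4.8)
The plan is to combine the downward-rotation argument of Chen--Warren \cite{ChenWarren} with Lemma \ref{HolderImprovement} and a subsequent upward rotation. The key point is that $\beta > \frac12$ is exactly the threshold that makes Lemma \ref{HolderImprovement} applicable with $l=1$, i.e. it upgrades $C^{1,\beta}$ to $C^{1,1}$ as soon as we know the gradient graph $L_u$ (equivalently, the graph of $Du$) is a smooth submanifold. So the argument splits into three stages.

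First, I would perform the downward rotation. Since $\Theta \ge (n-2)\frac\pi2 + \delta$, the phase is supercritical, and the method of Section 4 of \cite{ChenWarren} produces, after a suitable ``downward'' rotation of the $(x,y)$-coordinates in $\mathbb{C}^n$ by an angle depending on $\delta$, a new gradient graph $L_{\bar u} = (\bar x, D\bar u(\bar x))$ over a slightly smaller ball, which is again a (weak) Hamiltonian stationary Lagrangian graph, and whose potential $\bar u$ is now $C^{1,1}$ with Hessian bounded strictly below $1$ in norm after the rotation. Here one must check that the weak-solution conditions of Definition \ref{def1} are preserved under the rotation — this is where the hypotheses $u\in W^{2,1}$, $\sqrt g\in L^1$, $\Theta\in W^{1,2}$ are used, together with the fact that the rotation is an isometry of $\mathbb{C}^n$ preserving the Lagrangian and Hamiltonian-stationary conditions — but the $C^{1,\beta}$ regularity of $u$ with $\beta>\frac12$ gives enough a priori control (via Lemma \ref{HolderImprovement} applied to $Du$ after the rotation makes it graphical) to ensure $\bar u \in C^{1,1}$. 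By \cite[Theorem 1.2]{ChenWarren}, a weak $C^{1,1}$ solution of \eqref{hstat0} whose Hessian is bounded by a constant sufficiently less than $1$ (or whose rotated phase is supercritical) is smooth; hence $L_{\bar u}$ is a smooth submanifold.

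Second, having shown $L_{\bar u}$ is smooth, I would rotate back. The inverse rotation is again an isometry of $\mathbb{C}^n$, so $L_u$ is a smooth submanifold of $\mathbb{C}^n$ in the interior of the (slightly shrunk) ball. Now apply Lemma \ref{HolderImprovement} to the map $F = Du : \Omega \to \mathbb{R}^n$, which by hypothesis is $C^\beta$ with $\beta > \frac12 = \frac1{1+1}$ and whose graph $L_u$ is smooth: the lemma yields $Du \in C^{\frac11}_{\mathrm{loc}} = C^{0,1}_{\mathrm{loc}}$, i.e. $u \in C^{1,1}_{\mathrm{loc}}(B_1)$. At this stage $u$ is a weak $C^{1,1}$ solution of \eqref{hstat0} with supercritical phase $\Theta \ge (n-2)\frac\pi2+\delta$, so one final application of \cite[Theorem 1.2]{ChenWarren} (the supercritical-phase case, now in the \emph{original} coordinates, no rotation needed) gives that $u$ is smooth in $B_1$.

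The main obstacle I expect is the bookkeeping in the first stage: one must verify that the downward rotation can be carried out purely under a $C^{1,\beta}$ hypothesis (rather than the $C^{1,1}$ hypothesis under which \cite{ChenWarren} states it), and that the rotated object genuinely satisfies Definition \ref{def1} so that the Chen--Warren smoothness theorem applies. The resolution is that the supercriticality $\Theta\ge (n-2)\frac\pi2+\delta$ forces a uniform negative lower bound on the Hessian of $u$ (semi-convexity), which is a measure-theoretic/viscosity statement compatible with mere $C^{1,\beta}$ regularity; this lower bound is what guarantees that after a downward rotation by a small angle the new graph stays graphical with the eigenvalues of $D^2\bar u$ pushed into a regime where $\bar u$ is forced to be $C^{1,1}$, closing the loop. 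A secondary technical point is ensuring the radius loss at each rotation step is controlled so that smoothness is obtained on a fixed interior ball; tracking the angles and radii quantitatively (all depending only on $n,\delta,\|u\|_{C^{1,\beta}}$) handles this.
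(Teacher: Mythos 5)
Your proposal follows essentially the same route as the paper's proof: a downward rotation by a small angle depending on $\delta$ produces a $C^{1,1}$ supercritical weak solution $\bar u$, Chen--Warren then gives smoothness of $L_{\bar u}$, rotating back shows $L_u$ is a smooth submanifold, Lemma~\ref{HolderImprovement} with $l=1$ upgrades $Du$ from $C^\beta$ ($\beta>\tfrac12$) to $C^{0,1}_{\mathrm{loc}}$, and a final application of Chen--Warren gives smoothness of $u$. One small slip: the parenthetical in your first stage cites Lemma~\ref{HolderImprovement} as part of the reason $\bar u\in C^{1,1}$, but that lemma requires the graph to already be a smooth submanifold, which is not yet available at that point; the correct mechanism is the one you give in your closing paragraph---supercriticality yields a uniform negative lower bound on $D^2u$, and the small downward rotation converts this one-sided bound into a two-sided $C^{1,1}$ bound for $\bar u$.
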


\begin{proof} 
Since the phase satisfies 
$\Theta \geq (n-2)\frac \pi 2 +\delta$, we can perform a downward rotation of the gradient graph $L_u=\{(x,Du(x))\: |\: x\in B_1\}\subset\R^n\times \R^n$ by an angle $\varphi  < \frac \delta n$. The new rotated coordinates are given by
$$ \begin{pmatrix} 
\bar x \\ \bar y 
\end{pmatrix} 
= 
\begin{pmatrix} 
\cos \varphi &  \sin \varphi 
\\ -\sin \varphi &\cos \varphi
\end{pmatrix} 
\begin{pmatrix} 
x \\ y
\end{pmatrix}.$$
By the computations in \cite[Section 4]{ChenWarren}, this yields a new potential $\bar u(\bar x)$, which is a $C^{1,1}$ solution of 
\[
\Delta_g \bar \Theta=0 \text{ on } B_{r}
\] 
where $B_{r}\subset\bar x(B_1)$ and
\[
\bar \Theta=\Theta -n\varphi>\Theta-\delta.
\] 
Note that although the above-referenced computations of \cite[Section 4]{ChenWarren} are presented for a $C^{1,1}$ potential, they apply verbatim to the case of a $C^{1,\beta}$ potential.

The rotated phase remains in the supercritical range, and hence by \cite[Theorem 1.2]{ChenWarren}, the new potential $\bar u$ is smooth. We point out that although our definition of a weak solution is slightly different than that in \cite{ChenWarren}, the proof of their result does not use the condition $u \in W^{2,n}$. Consequently, the gradient graph $ L_{\bar u}= \{ (\bar x , D \bar u (\bar x)) \: \vert \: \bar x \in B_r\} $ is a smooth submanifold of $\C^n$. Rotating it back up by the same angle $\varphi$, and using the fact that rotations preserve smooth manifolds, we conclude that the resulting submanifold, $L_u= \{ (x, D u (x) )\: \vert\: x\in B_{r'}\}$  where $B_{r'} \subset \bar x^{-1}(B_r)$, is also smooth.

 Now applying Lemma \ref{HolderImprovement} we get  $u \in C^{1,1}_{\mathrm{loc}}$. Then $u$ is smooth by \cite[Theorem 1.2]{ChenWarren}.
\end{proof} 

Next, we prove the following Lemma to push the H\"older exponent to $\frac{1}{2}$.

\begin{lemma}\label{Noninjective} If $F : \R^n \to \R^n$ is a smooth map with partial derivatives vanishing up to order $2k-1$ $(k \geq 1)$ and there exists a constant $c>0$ such that $|F(x)| \geq c |x|^{2k} $ for $x$ in $B_r$, then $F$ is not injective on any neighborhood of $0$. 
\end{lemma}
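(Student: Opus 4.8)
The plan is to argue by contradiction via Brouwer degree theory. Expand $F$ at the origin: since the partial derivatives of $F$ vanish up to order $2k-1$ (in particular $F(0)=0$), Taylor's theorem gives $F=P+E$ on a fixed ball $B_{r_0}$, where $P$ is the homogeneous degree-$2k$ part of the Taylor polynomial of $F$ at $0$ and $|E(x)|\le C|x|^{2k+1}$. First I would record the role of the lower bound: evaluating $|F(r\omega)|\ge c\,r^{2k}$ for $\omega\in S^{n-1}$, dividing by $r^{2k}$, and letting $r\to 0$ yields $|P(\omega)|\ge c$ on $S^{n-1}$, hence $|P(x)|\ge c|x|^{2k}$ for all $x$ by homogeneity. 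Thus $P$ is a polynomial map that is nonzero off the origin and, crucially, \emph{even}: $P(-x)=(-1)^{2k}P(x)=P(x)$ since $2k$ is even.

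Next, suppose for contradiction that $F$ is injective on $B_\rho$ for some small $\rho>0$; shrinking $\rho$ we may assume $\rho<\min(r_0,\,c/(2C))$. Fix any $\rho'<\rho$. On $\partial B_{\rho'}$ we have $|E|\le C\rho'|x|^{2k}<\tfrac12|P|$, so the segment homotopy $P+tE$, $t\in[0,1]$, is nonvanishing on $\partial B_{\rho'}$; consequently $\deg(F,B_{\rho'},0)=\deg(P,B_{\rho'},0)$. Since $P$ is homogeneous of degree $2k$ and nonzero off the origin, the latter equals $\deg(\hat P)$, where $\hat P:=P/|P|:S^{n-1}\to S^{n-1}$. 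On the other hand, $F$ is injective on $\overline{B_{\rho'}}\subset B_\rho$, $0=F(0)\in F(B_{\rho'})$, and $0\notin F(\partial B_{\rho'})$ because $|F|\ge c(\rho')^{2k}>0$ there; so invariance of domain (equivalently, the standard fact that the local degree of an injective continuous map at an interior point of its image is $\pm1$) gives $\deg(F,B_{\rho'},0)=\pm1$. Combining, $\deg(\hat P)=\pm1$.

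It remains to derive a contradiction by showing $\deg(\hat P)$ is even. Since $\hat P$ is even, it factors through the double cover, $\hat P=\bar P\circ\pi$ with $\pi:S^{n-1}\to\mathbb{RP}^{n-1}$. When $n-1$ is even, composing with the antipodal map $a$ gives $\deg(\hat P)=\deg(\hat P\circ a)=\deg(\hat P)\cdot\deg(a)=-\deg(\hat P)$, so $\deg(\hat P)=0$. When $n-1$ is odd, $\mathbb{RP}^{n-1}$ is orientable with $H^{n-1}(\mathbb{RP}^{n-1};\mathbb Z)\cong\mathbb Z$, and the induced map $\hat P^{*}$ on top cohomology factors through $\pi^{*}$, which is multiplication by $\deg(\pi)=\pm2$; hence $\deg(\hat P)\in 2\mathbb Z$. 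In either case $\deg(\hat P)$ is even, contradicting $\deg(\hat P)=\pm1$. Therefore $F$ cannot be injective on any neighborhood of the origin.

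I expect the main obstacle to be the purely topological input that an even self-map of a sphere has even degree: the one-line antipodal argument only settles the case $n-1$ even (i.e., $n$ odd), and the case $n$ even genuinely requires the covering-space/cohomology computation above (or a clean citation). A secondary point needing care — though entirely standard — is the justification that an injective continuous map has local degree $\pm1$ at interior points of its image, which rests on invariance of domain; one should cite a reference from degree theory for this rather than reprove it.
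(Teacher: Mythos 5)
Your proof is correct and shares the paper's skeleton: both reduce, via Taylor expansion, homotopy invariance of degree, and the fact that an injective map has local degree $\pm 1$, to showing that the even self-map $\hat P = P/|P| : S^{n-1} \to S^{n-1}$ has even degree. The two arguments diverge only in how that topological fact is established. You split by the parity of $n$: the antipodal-map trick for $n$ odd (since $\deg a = (-1)^n = -1$ forces $\deg \hat P = -\deg\hat P$, hence $0$), and the covering-space factorization through $\mathbb{RP}^{n-1}$ for $n$ even (where $\pi^*$ multiplies top cohomology by $\pm 2$). The paper instead applies Sard's theorem to pick a regular value $y$, notes that evenness makes $\hat P^{-1}(y)$ a union of antipodal pairs, and concludes even degree from even cardinality. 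As written, that last inference is abbreviated --- degree is a signed count --- but differentiating $\hat P = \hat P \circ a$ shows the two points of each antipodal pair contribute signs that coincide when $n$ is even and cancel when $n$ is odd, so each pair contributes an even amount and the conclusion stands. Your case split makes this parity bookkeeping fully explicit and avoids Sard; the paper's version is shorter but leaves that step to the reader. Both are valid, and either of you could streamline: the antipodal-differentiation observation alone handles both parities at once without needing the separate cohomological argument for $n$ even.
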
 

\begin{proof} 
First we observe that since $|F(x)| \geq c |x|^{2k} $, the map $F$ has an isolated zero at $0$. If $F$ were injective in a neighborhood of $0$, its local degree must be $\pm 1$ \cite{Hatcher}. Let $Q$ denote the homogeneous degree $2k$ part of the Taylor expansion of $F$. Since $Q$ is of even degree, $Q(x) = Q(-x)$.

For $s < r$, consider the map $\Phi_s:S^{n-1} \to S^{n-1}$ given by \[\Phi_s(x) = \frac{ F(sx)}{|F(sx)|}.\]

Letting $s \to 0$, we see that $\Phi_s \to \frac{Q(x)}{|Q(x)|} $ uniformly, so $\Phi_s$ is homotopic to $ \frac{Q(x)}{|Q(x)|}$. By Sard's Theorem, there exists $y \in S^{n-1}$ such that $\left ( \frac{ Q(\cdot ) }{|Q(\cdot)| } \right )^{-1} ( \{ y \} ) $ is a discrete set. Since $Q$ is even, the preimage of $y$ has even cardinality and therefore $\frac{ Q(\cdot ) }{|Q(\cdot)| }$ has even degree. Since degree is homotopy invariant, $\Phi_s$ has even degree, and therefore $F$ has even local degree at $0$, contradicting injectivity. 
\end{proof} 

\begin{proposition}\label{holder_half} Let $u \in C^{1, \frac 12} $ be a weak solution of \eqref{hstat0} on the unit ball $B_1 \subset \R^n $ with $\Theta \geq (n-2) \frac \pi 2 + \delta $. Then $u$ is smooth inside $B_1$. \label{Holder1/2}
\end{proposition}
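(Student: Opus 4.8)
The plan is to follow the same strategy as in Proposition \ref{Holder1/2to1}, namely perform a downward rotation, conclude smoothness of the rotated graph via \cite[Theorem 1.2]{ChenWarren}, and then deduce that the original gradient graph is a smooth submanifold of $\C^n$; the new input needed here is an argument showing that a $C^{1,1/2}$ function whose gradient graph is smooth is in fact $C^{1,1}_{\mathrm{loc}}$, so that \cite[Theorem 1.2]{ChenWarren} applies a second time to upgrade $u$ to a smooth solution. Since $\beta = \tfrac12$ lies in the range $(\tfrac13, 1)$ but not in $(\tfrac12,1)$, Lemma \ref{HolderImprovement} only gives $F \in C^{1/2}_{\mathrm{loc}}$ for $F = Du$, i.e. no improvement, so one must work harder.

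First I would carry out the downward rotation by an angle $\varphi < \delta/n$ exactly as in the proof of Proposition \ref{Holder1/2to1}: the computations of \cite[Section 4]{ChenWarren} apply verbatim to a $C^{1,\beta}$ potential, yielding a $C^{1,1}$ potential $\bar u(\bar x)$ solving $\Delta_g \bar\Theta = 0$ on a ball $B_r$ with $\bar\Theta = \Theta - n\varphi > \Theta - \delta \geq (n-2)\tfrac\pi2$, which is still supercritical; hence $\bar u$ is smooth by \cite[Theorem 1.2]{ChenWarren}, and rotating back up shows $L_u = \{(x, Du(x)) : x \in B_{r'}\}$ is a smooth submanifold of $\C^n$ for some $B_{r'}$. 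The crux is then the following: if $Du \in C^{1/2}$ and $L_u$ is a smooth submanifold, then $Du \in C^{1}_{\mathrm{loc}}$, i.e. $u \in C^{1,1}_{\mathrm{loc}}$. Suppose not; then as in Lemma \ref{HolderImprovement} there is a point, which we take to be the origin with $u(0) = 0$, $Du(0) = 0$, at which $L_u$ has a vertical tangent plane $P$. After rotating the $x$- and $y$-coordinates, choose bases $\{\partial_{y_1},\dots,\partial_{y_k}\}$ for $\ker\pi_x|_P$ and $\{\partial_{x_{k+1}},\dots,\partial_{x_n}\}$ for $\pi_x(P)$, and by the Implicit Function Theorem write $L_u$ locally as the graph of a smooth map $G(y', x'')$ with $D_{y'}G(0) = 0$. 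The graph being Lagrangian forces a symmetry on $G$: the change of coordinates is symplectic, so in the new coordinates the graph is still a gradient graph, hence $G = (D_{y'}\psi, D_{x''}\psi)$ (suitably interpreted) for a smooth potential $\psi$ with $D^2\psi(0)$ having a kernel in the $y'$-directions. I would exploit that the component of $G$ recording $\partial u/\partial x = y$ in the new coordinates is governed by the smooth function $G$, restricted to the $(y',x'')$-plane, which vanishes to order at least $2$ at the origin together with a key nondegeneracy coming from $C^{1/2}$-regularity of $Du$.

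The mechanism for the contradiction is Lemma \ref{Noninjective} with $k=1$: the map $x \mapsto Du(x)$ would, near a vertical-tangent-plane point, factor (after the symplectic rotation) through a smooth self-map $F$ of a coordinate neighborhood whose partials vanish at the origin and which satisfies $|F(z)| \gtrsim |z|^2$ — the lower bound being exactly what the hypothesis $Du \in C^{1/2}$, equivalently $|y(t)| \lesssim |x(t)|^{1/2}$, translates into along curves through the vertical tangent direction (in the notation of Lemma \ref{HolderImprovement}, $|e_y| \geq \tfrac12$ forces $|t| \lesssim |y(t)| \lesssim |x(t)|^{1/2}$, hence $|x(t)| \gtrsim |t|^2$, i.e. the relevant smooth map grows at least quadratically). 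But $Du$, being the gradient of a function, locally has the structure of a map whose graph is Lagrangian; pairing this with the quadratic lower bound and the vanishing of first derivatives puts us precisely in the setting of Lemma \ref{Noninjective}, which yields that this map is non-injective near the origin. This contradicts the fact that, $L_u$ being a graph over the $x$-plane, the map $x \mapsto (x, Du(x))$ — and hence $x\mapsto Du(x)$ composed with the coordinate rotation — is injective. Therefore $u \in C^{1,1}_{\mathrm{loc}}$, and a final application of \cite[Theorem 1.2]{ChenWarren} gives smoothness of $u$ in $B_1$.

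The main obstacle I anticipate is making precise the claim that, at a vertical-tangent point, the Gauss-type map $x \mapsto Du(x)$ (after the symplectic coordinate rotation straightening $P$) is literally a smooth map to which Lemma \ref{Noninjective} applies with the correct growth exponent $2k = 2$: one must track how the Lagrangian/gradient-graph structure survives the rotation, identify the right smooth map $F$ whose injectivity encodes the graphicality of $L_u$, verify that its first-order Taylor coefficients vanish, and confirm that the $C^{1/2}$ bound on $Du$ gives exactly the quadratic lower bound $|F(z)| \geq c|z|^2$ rather than something weaker. Handling the degenerate cases where $k = n$ or where the vertical direction is only partial will require the same bookkeeping as in Lemma \ref{HolderImprovement}, adapted to track the second-order behavior.
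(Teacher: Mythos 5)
Your proposal follows essentially the same strategy as the paper: downward rotation (yielding a smooth submanifold $L_u$ by \cite[Theorem 1.2]{ChenWarren}), then a contradiction argument at a putative vertical-tangent-plane point based on Lemma \ref{Noninjective} with the quadratic growth $|F(z)| \gtrsim |z|^2$ coming from $Du \in C^{1/2}$. That is exactly the paper's route, so the conceptual skeleton is right.

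Two points of imprecision are worth flagging. First, the detour through the Lagrangian/symplectic structure of $L_u$ (writing $G = (D_{y'}\psi, D_{x''}\psi)$ in the rotated coordinates) is unnecessary and in fact not available: the paper rotates the $x$-plane and the $y$-plane independently to diagonalize $\ker\pi_x|_P$ and $\pi_x(P)$, and such a rotation is generally \emph{not} symplectic, so the rotated submanifold need not be a gradient graph. The paper's argument uses only that $L_u$ is a smooth submanifold of $\C^n$ together with the H\"older bound; no Lagrangian structure is needed at this step. Second, and more substantively, your final contradiction as stated does not quite close: non-injectivity of ``$x \mapsto Du(x)$ composed with the coordinate rotation'' is not obstructed by graphicality of $L_u$ over the $x$-plane --- $Du$ itself may well be non-injective on a graph. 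The correct map is the one the paper calls $F'\colon \R^k \to \R^k$, obtained by writing $L_u$ as a graph $(y',x'') \mapsto G(y',x'') = (x'(y',x''), y''(y',x''))$ over the straightened tangent plane, restricting to the slice $\{x''=0\}$, and projecting to the $x'$-component. It is \emph{this} map whose injectivity encodes graphicality over the $x$-plane (two preimages of $F'$ give two points of $L_u$ with the same $x$-coordinate), and the $C^{1/2}$ bound $|y|^2 \leq C|x|$ gives the lower bound $|F'(y')| \geq C|y'|^2$, while tangency of $\partial_{y_1},\dots,\partial_{y_k}$ to $L_u$ at the origin gives $DF'(0)=0$. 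You anticipate exactly this as the ``main obstacle,'' so you have the right target; you would just need to carry out the identification of $F'$ as above rather than working with $Du$ directly.
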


\begin{proof} As in the proof of Proposition \ref{Holder1/2to1}, we perform a downward rotation of the gradient graph  $L_u=\{(x,Du(x)) \: |\: x\in B_1\}\subset\R^n\times \R^n$. By \cite{ChenWarren}, we get that the gradient graph $L_u$ is a smooth submanifold of $\C^n$.

Let's assume $u$ is singular at $0$ and that $D u ( 0) =0.$ Let $\pi_x$ be the orthogonal projection onto the $x$-plane in $\C^n$. After rotating coordinates on the $y$-plane, we assume that $\{\partial_{y_1}, \dots , \partial_{y_k} \}$ $(k \geq 1)$ forms a basis for $\ker \pi_x |_P.$ After another coordinate change on the $x$-plane, we assume that $\{\partial_{x_{k+1} } , \dots , \partial_{x_n}\} $ forms a basis for $\pi_x ( P) .$ Then $P$ projects nondegenerately onto the plane spanned by the vectors $\partial_{y_1}, \dots , \partial_{y_k}, \partial_{x_{k+1} }, \dots , \partial_{x_n} $. So $L_u$ is a graph over this plane:
$$ L_u = \{ (y_1, \dots , y_k, x_{k+1 }, \dots , x_n, F ( y_1, \dots , y_k, x_{k+1 }, \dots , x_n)) \}$$
where $F : \R^n \to \R^n$. Since the vectors $\partial_{y_1}, \dots , \partial_{y_k}$ are tangent to $L_u$ at $0$, we get $\partial_{y_i} F (0) = 0 $ for $i = 1, \dots, k$. Therefore, $F$ has a Taylor expansion of the form 
\begin{equation}\label{TaylorExp}  F(y^\prime ,x^{\prime \prime }) = F_1(x^{\prime \prime }  ) + F_2(y^{\prime } ,x^{\prime \prime } ) + \dots \end{equation}
where  $F_i$ is the degree $i$ homogeneous term of the Taylor series of $F$.

Next we consider the map $F^\prime :\R^k \to \R^k$ given by projecting \[F(y_1, \dots, y_k, 0 , \dots , 0)  \] onto the $x_1, \dots , x_k$ directions. Since $D u \in C^{\frac 12 }$, points on $L_u$ satisfy $|y|^2 \leq C | x|$, which implies \[|F^\prime ( y_1 ,\dots, y_k) | \geq C |(y_1, \dots, y_k)|^2. \] Combining this with \eqref{TaylorExp} shows that $F^\prime$ is quadratic to leading order and verifies the hypothesis of Lemma \ref{Noninjective}. Therefore, $F^\prime$ is not injective. So there exist distinct points $(y_1^{(1)} , \dots, y_{k}^{(1)})$ and 
$(y_1^{(2)} , \dots, y_{k}^{(2)})$ such that $F^\prime (y_1^{(1)} , \dots, y_{k}^{(1)})=F^\prime(y_1^{(2)} , \dots, y_{k}^{(2)}) .$ This means that two different points of $L_u$ have the $x$-coordinate given by  \[(F^\prime (y_1^{(1)} , \dots, y_{k}^{(1)}), 0, \dots, 0),\] contradicting the assumption that $L_u$ is a graph. Therefore, $L_u$ does not have a vertical tangent plane in the interior of $B_1$. Thus $ u\in C^{1,1}_{\mathrm{loc}}$ and is therefore smooth in the interior by \cite[Theorem 1.2]{ChenWarren}.
\end{proof} 

\begin{proposition} \label{holder_third}
Let $u \in C^{1, \beta}$ be a weak solution of \eqref{hstat0}  on the unit ball $B_1\subset \mathbb{R}^n$  with $\beta \in ( \frac13 , \frac12)$ and  $\Theta \geq (n-2)\frac{\pi}{2}+\delta$. Then $u$ is smooth inside $B_1$.

\end{proposition}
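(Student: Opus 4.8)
The plan is to follow the same rotation-and-degree strategy used in Propositions \ref{Holder1/2to1} and \ref{Holder1/2}, but now adapted to the weaker H\"older exponent $\beta \in (\tfrac13,\tfrac12)$. First I would perform the downward rotation of $L_u = \{(x,Du(x)) : x \in B_1\}$ by a small angle $\varphi < \tfrac{\delta}{n}$ exactly as before. By the computations in \cite[Section 4]{ChenWarren} (which, as noted, apply verbatim to $C^{1,\beta}$ potentials), the rotated submanifold is the gradient graph of a $C^{1,1}$ potential $\bar u$ solving $\Delta_g\bar\Theta = 0$ with $\bar\Theta > \Theta - \delta$ still supercritical, hence smooth by \cite[Theorem 1.2]{ChenWarren}. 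Rotating back up, $L_u$ is a smooth submanifold of $\C^n$. So the entire content of the proposition is, once again, to rule out a vertical tangent plane; if none exists then $u \in C^{1,1}_{\mathrm{loc}}$ and smoothness follows from \cite[Theorem 1.2]{ChenWarren}, and then Lemma \ref{HolderImprovement} is consistent (with $l=1$, since $\beta > \tfrac13$ gives only $C^{1/1}$ improvement when combined with $l=1$... actually $\beta > \tfrac13 = \tfrac{1}{l+1}$ forces $l \le 2$, so a priori Lemma \ref{HolderImprovement} only yields $C^{1,1/2}_{\mathrm{loc}}$, which then feeds into Proposition \ref{holder_half}).

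Concretely, I would argue as follows. Suppose for contradiction that $u$ is singular at $0$ with $Du(0)=0$ and a vertical tangent plane $P$ to $L_u$ there. As in Proposition \ref{holder_half}, after rotating the $x$- and $y$-coordinates, write $P$ so that $\{\partial_{y_1},\dots,\partial_{y_k}\}$ ($k\ge1$) is a basis for $\ker\pi_x|_P$ and $\{\partial_{x_{k+1}},\dots,\partial_{x_n}\}$ is a basis for $\pi_x(P)$; then $L_u$ is locally a smooth graph $(y',x'') \mapsto (y', x'', F(y',x''))$ with $F:\R^n\to\R^n$ and $D_{y'}F(0)=0$, giving the Taylor expansion \eqref{TaylorExp}. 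Now project $F(y_1,\dots,y_k,0,\dots,0)$ onto the $x_1,\dots,x_k$ directions to get $F':\R^k\to\R^k$. The key point is the quantitative consequence of $C^{1,\beta}$ H\"older continuity: points on $L_u$ near $0$ satisfy $|y| \lesssim |x|^{\beta}$, i.e. $|x| \gtrsim |y|^{1/\beta}$, and since $\tfrac13 < \beta < \tfrac12$ we have $2 < \tfrac1\beta < 3$, so along the slice $\{x''=0, y''=0\}$ we get $|F'(y')| \gtrsim |y'|^{1/\beta}$. The leading homogeneous part of $F'$ has some even degree $2k' \le 3$ by the vanishing of first-order $y'$-derivatives — so $2k' = 2$, meaning the leading term is again quadratic, and we are in exactly the situation of Lemma \ref{Noninjective} with $k=1$: $F'$ has partial derivatives vanishing to order $1$ and $|F'(y')| \gtrsim |y'|^2$. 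Hence $F'$ is non-injective near $0$, producing two distinct points of $L_u$ with the same $x$-coordinate, contradicting graphicality. Therefore $L_u$ has no vertical tangent plane, $u \in C^{1,1}_{\mathrm{loc}}$, and $u$ is smooth by \cite[Theorem 1.2]{ChenWarren}.

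The step I expect to be the main obstacle — and the one requiring the most care — is the matching of the H\"older lower bound $|x| \gtrsim |y|^{1/\beta}$ with the Taylor order of $F'$. Unlike the $\beta=\tfrac12$ case, where $|x|\gtrsim|y|^2$ lines up exactly with the quadratic leading term and Lemma \ref{Noninjective} applies with $k=1$, here $1/\beta$ is a non-integer in $(2,3)$, so the clean statement ``$F'$ is quadratic to leading order'' needs justification: one must rule out the leading homogeneous term of $F'$ vanishing identically (which would force $|F'(y')| = o(|y'|^2)$ in some directions, contradicting $|F'(y')| \gtrsim |y'|^{1/\beta} \gg |y'|^3$ — wait, $1/\beta < 3$, so actually $|y'|^{1/\beta} \gg |y'|^3$, and a vanishing quadratic part would give $|F'| \lesssim |y'|^3$ along some direction, which is incompatible since $1/\beta < 3$). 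So the argument is: the quadratic part of $F'$ cannot vanish identically, because otherwise $|F'(y')| \lesssim |y'|^3$ along a suitable approach while H\"older continuity demands $|F'(y')| \gtrsim |y'|^{1/\beta}$ with $1/\beta < 3$ — a contradiction. Once the quadratic leading part is established, Lemma \ref{Noninjective} with $k=1$ closes the argument, and the remaining bookkeeping (reducing to the $y'$-slice, handling the multi-component structure of $F$, tracking constants) is routine and parallels Proposition \ref{holder_half} closely.
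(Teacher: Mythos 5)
Your proposal is ultimately correct, and in fact you have the paper's proof already written down in the parenthetical near the end of your first paragraph: once $L_u$ is a smooth submanifold, Lemma \ref{HolderImprovement} with $l=2$ (valid since $\beta > \tfrac13 = \tfrac{1}{2+1}$) upgrades $Du$ to $C^{1/2}_{\mathrm{loc}}$, and then Proposition \ref{Holder1/2} finishes. That two-line chain is the entire proof in the paper, and you should have led with it rather than treating it as an aside.

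What you actually present, in the second and third paragraphs, is a direct argument that bypasses the intermediate $C^{1,1/2}$ improvement and attacks the vertical-tangent-plane alternative head-on with Lemma \ref{Noninjective}. On careful inspection this does work, but for a subtler reason than your exposition makes clear. The H\"older bound only gives $|F'(y')| \gtrsim |y'|^{1/\beta}$ with $1/\beta \in (2,3)$, which is a \emph{weaker} pointwise lower bound than the $|y'|^2$ bound Lemma \ref{Noninjective} requires. The correct way to bridge this gap is: since $F'$ is smooth with $F'(0)=0$ and $D F'(0)=0$, its Taylor expansion begins at degree $2$, and for each unit direction $e$, if the quadratic form $F'_2$ vanished at $e$ then $|F'(te)| = O(|t|^3)$, contradicting $|F'(te)| \gtrsim |t|^{1/\beta}$ with $1/\beta < 3$. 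Hence $F'_2(e) \neq 0$ for \emph{every} unit vector $e$, compactness of the sphere gives $|F'_2(y')| \geq c_0 |y'|^2$, and only then is the hypothesis of Lemma \ref{Noninjective} verified. Your third paragraph gestures at this, but the phrasing ``rule out the leading homogeneous term vanishing identically'' is too weak (you need non-vanishing on every direction, not just non-vanishing as a polynomial), and the remark about ``some even degree $2k' \le 3$'' is confused---nothing forces the leading homogeneous part of $F'$ to have even degree a priori; what matters is only the lower bound $|F'| \gtrsim |y'|^2$ and the vanishing of first derivatives.

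In short: you re-derive the combined content of Lemma \ref{HolderImprovement} and Proposition \ref{Holder1/2} instead of invoking them, which is workable but longer and requires the quantitative Taylor analysis above to be stated precisely. The modular route---smoothness of $L_u$, then Lemma \ref{HolderImprovement} ($l=2$), then Proposition \ref{Holder1/2}---is the one the paper takes, and it is both shorter and cleaner, since it lets Proposition \ref{Holder1/2} handle the degree argument at the exponent $\tfrac12$ where the lower bound and the quadratic Taylor term line up without any extra work.
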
 

\begin{proof} The proof follows the same strategy as in Proposition \ref{Holder1/2to1}. Since the gradient graph $L_u$ is a smooth submanifold, Lemma \ref{HolderImprovement} implies that $u \in C^{1,\frac{1}{2}}_{\mathrm{loc}}$. Applying Proposition \ref{Holder1/2} then yields smoothness of $u$ in the interior.
\end{proof} 

Now we have all the ingredients to prove the main result of this section. 
\begin{proof}[Proof of Theorem \ref{Thm_reg_holder}.]
 Smoothness of $u$ is obtained by combining Propositions \ref{Holder1/2to1}, \ref{holder_half}, and \ref{holder_third}, after which the desired estimate follows from \cite[Theorem 1.2]{ChenWarren}.
\end{proof}

In fact, note that combining Lemma \ref{HolderImprovement} with Lemma \ref{Noninjective} and using the argument in the proof of Proposition \ref{Holder1/2}, we get the following. 

\begin{lemma} Suppose that $F : \R^n \to \R^n$ is $C^\beta $ where $\beta > \frac{ 1}{ 2l+ 1} $ and $l \geq 1$ is an integer. If the graph of $F$ is a smooth submanifold of $\R^n \times \R^n$, then $F \in C^{\frac{ 1} { 2l-1} }_{\mathrm{loc}}$. 
\end{lemma}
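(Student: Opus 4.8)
The plan is to obtain the improvement in two stages, paralleling the chain Proposition~\ref{Holder1/2to1}$\to$Proposition~\ref{holder_half}$\to$Proposition~\ref{holder_third} behind Theorem~\ref{Thm_reg_holder}: first apply Lemma~\ref{HolderImprovement} to push the exponent up to $\tfrac1{2l}$, then gain the last step to $\tfrac1{2l-1}$ through a non-injectivity obstruction of the kind used in Proposition~\ref{Holder1/2}. Since $\beta>\tfrac1{2l+1}=\tfrac1{(2l)+1}$, Lemma~\ref{HolderImprovement} applied with the integer $2l$ gives $F\in C^{1/(2l)}_{\mathrm{loc}}$. Fix $K\Subset\mathbb{R}^n$ and suppose, toward a contradiction, that $F\notin C^{1/(2l-1)}(K)$. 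As in the proofs of Lemma~\ref{HolderImprovement} and Proposition~\ref{Holder1/2}, smoothness of the graph forces a vertical tangent plane $P$ at some point, normalized to the origin with $F(0)=0$; after rotating the $x$- and $y$-coordinates as in Proposition~\ref{Holder1/2} so that $\partial_{y_1},\dots,\partial_{y_k}$ $(k\ge1)$ span $\ker\pi_x|_P$ and $\partial_{x_{k+1}},\dots,\partial_{x_n}$ span $\pi_x(P)$, the graph is locally $\{(y',x'',\tilde F(y',x''))\}$ with $\tilde F=(x',y'')$ smooth, $D_{y'}\tilde F(0)=0$, and — because $\pi_x(P)$ is exactly the $x''$-plane — with $x'$-component $\tilde F_{x'}$ vanishing to order $\ge2$ at $0$. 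Put $F'(y'):=\tilde F_{x'}(y',0)\colon\mathbb{R}^k\to\mathbb{R}^k$; it is smooth, vanishes to order $\ge2$, and, because the graph is a graph over the $x$-plane, is injective near $0$.

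Restricting $F\in C^{1/(2l)}_{\mathrm{loc}}$ to the slice $x''=0$, where $|x|=|F'(y')|$ and $|y|\ge|y'|$, yields $|F'(y')|\ge c|y'|^{2l}$ near $0$; hence the order of vanishing $m$ of $F'$ satisfies $m\le 2l$. If $m=2l$, then this bound forces the degree-$2l$ homogeneous leading part $Q$ of $F'$ to be nonzero on $S^{k-1}$, so $\Phi_s(z):=F'(sz)/|F'(sz)|$ is, for small $s$, a self-map of $S^{k-1}$ homotopic to $Q/|Q|$; since $Q$ has even degree, the Sard argument from the proof of Lemma~\ref{Noninjective} makes $\deg\Phi_s$ even, contradicting that injectivity forces $F'$ to have local degree $\pm1$ at its isolated zero. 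Thus $m\le 2l-1$. When $l=1$ this already produces the contradiction, since then necessarily $m=2=2l$; that case is precisely the content of Proposition~\ref{Holder1/2}.

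For $l\ge2$ it remains to derive a contradiction from $m\le 2l-1$. Unwinding the failure of $C^{1/(2l-1)}$ at the origin through the coordinate change produces a sequence $y'_i\to0$ with $|F'(y'_i)|=o(|y'_i|^{2l-1})$ (the same kind of Taylor bookkeeping as in the proofs of Lemma~\ref{HolderImprovement} and Proposition~\ref{Holder1/2}); combined with $|F'(y')|\ge c|y'|^{2l}$ this forces $Q$ to vanish in the limiting direction, i.e.\ $F'$ decays faster than order $2l-1$ along some ray. One then re-runs the curve-and-Taylor estimate of Lemma~\ref{HolderImprovement}: for each unit $e$ in the $(y',x'')$-plane the $x$-coordinate of $t\mapsto(te,\tilde F(te))$ vanishes to a finite order $j(e)$, and $j(e)\ge2l$ can occur only for $e$ in a $y'$-direction along which $F'$ vanishes to order $\ge2l$; Lemma~\ref{Noninjective}, applied to the leading homogeneous behavior of $F'$ in that direction, excludes this once injectivity is used (an even-order nondegenerate leading behavior would break injectivity, while an odd-order one keeps the order $\le 2l-1$). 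Hence $j(e)\le 2l-1$ for every $e$, and the Lemma~\ref{HolderImprovement} estimate then upgrades $F$ to $C^{1/(2l-1)}_{\mathrm{loc}}$ near $0$, contradicting the choice of $K$.

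The step I expect to be genuinely delicate is this last one — showing that $m\le2l-1$ actually forces $C^{1/(2l-1)}$ regularity — specifically, controlling the decay rate of the injective germ $F'$ along the \emph{degenerate} directions of its leading part $Q$ (where $Q$ vanishes on a positive-dimensional cone), so that $j(e)$ is bounded uniformly. Making this rigorous will likely require either an induction on the dimension $k$, reducing near such a direction to a lower-dimensional injective germ with analogous leading homogeneity, or a direct structural description of smooth injective germs $(\mathbb{R}^k,0)\to(\mathbb{R}^k,0)$ of finite vanishing order, with Lemma~\ref{Noninjective} supplying the parity that makes the odd exponent $\tfrac1{2l-1}$ appear.
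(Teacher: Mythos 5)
Your proposal follows exactly the route the paper sketches in its one-sentence remark: apply Lemma~\ref{HolderImprovement} (with the integer $2l$) to get $F\in C^{1/(2l)}_{\mathrm{loc}}$, then run the vertical-tangent-plane / $F'$ construction of Proposition~\ref{Holder1/2} together with Lemma~\ref{Noninjective}. Your bookkeeping up to the conclusion ``$m\le 2l-1$'' (where $m$ is the vanishing order of $F'$) is correct, and for $l=1$ this is literally Proposition~\ref{Holder1/2}. But the step you flag in your final paragraph is a genuine gap, and as written the argument does not close. What the curve-and-Taylor estimate actually requires is $j(\hat e)\le 2l-1$ for \emph{every} direction $\hat e\in S^{k-1}$, where $j(\hat e)$ is the vanishing order of $t\mapsto F'(t\hat e)$, and $m\le 2l-1$ does not give this. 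If the leading homogeneous part $Q=F'_m$ vanishes on a positive-dimensional cone, then $j(\hat e)>m$ along those rays; the bound $|F'|\ge c|y'|^{2l}$ only caps $j(\hat e)$ at $2l$, and one must still rule out $j(\hat e)=2l$. Lemma~\ref{Noninjective} in the form proved applies to the full map $F'$ under a uniform lower bound $|F'|\ge c|y'|^{2p}$ with \emph{all} derivatives vanishing through order $2p-1$, neither of which holds when $Q$ is degenerate; and restricting $F'$ to the ray $\R\hat e$ yields a curve into $\R^k$ ($k\ge 2$), where a $t^{2l}$ leading term does not by itself obstruct injectivity (compare $t\mapsto(t^2,t^3)$), so the even/odd parity you invoke is not available directionwise.

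The fixes you gesture at (induction on $k$ by reducing near a degenerate direction, or a structure theorem for smooth injective finite-order germs) are plausible, but you do not carry them out, and the induction step in particular would itself require a nontrivial blow-up/rescaling analysis that you have not supplied. So the proposal is the paper's approach, correctly set up, with an honestly identified but unresolved step. It is worth noting that the paper itself offers no proof of this lemma beyond the remark that it follows by ``combining'' the same three ingredients, so the paper does not resolve this subtlety on the page either; to make the lemma rigorous as stated, this directional uniformity needs an actual argument.
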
 

This shows that singular H\"older continuous solutions of \eqref{hstat0} with phase $\Theta \geq  (n-2) \frac \pi 2  + \delta $ are precisely $C^{1,\frac 1 {2l+1}}$ for some integer $l \geq 1.$ In the next section, we show that each possible degree of H\"older regularity is achieved by a weak solution with hypercritical phase.

We conclude this section with an interesting observation on the H\"older continuity of the Lagrangian phase.

\begin{proposition}
    Let $u \in C^{1, \beta}$ be a semi-convex weak solution of \eqref{hstat0} on the unit ball $B_1\subset \mathbb{R}^n$. Then there exists a $\gamma$ such that the Lagrangian phase, $\Theta$, is $C^\gamma$ inside $B_1$. 
\end{proposition}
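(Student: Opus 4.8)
The plan is to show that $\Theta$ is $g$-harmonic on a metric $g$ with sufficiently controlled coefficients, so that standard De Giorgi--Nash--Moser theory applies. First I would observe that since $u$ is semi-convex, there is a uniform lower bound $\lambda_i \geq -K$ on the eigenvalues of $D^2 u$. If $u \in C^{1,\beta}$ only, we do not yet have an upper bound on the Hessian, so the first task is to bootstrap regularity: by the supercriticality that semi-convexity provides (or, more precisely, by noting that semi-convexity together with the arctangent sum gives a lower bound on $\Theta$ away from the degenerate regime), one can invoke Theorem \ref{Thm_reg_holder} when $\beta > \frac13$, which yields that $u$ is smooth in $B_{1/2}$ and hence $\Theta$ is smooth there; so the only interesting range is $\beta \leq \frac13$. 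Actually the cleanest path is not to assume anything about $\beta$ beyond $C^{1,\beta}$ and instead argue directly on the metric: one wants to extract from semi-convexity and the weak formulation in Definition \ref{def1} enough to run a Moser iteration for the uniformly elliptic-in-divergence-form equation satisfied by $\Theta$.

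The key steps, in order, would be: (1) From semi-convexity, deduce $D^2 u \geq -KI$; combined with $\Theta = \sum \arctan \lambda_i$ being bounded (as $u \in W^{2,1}$ with $\sqrt{g} \in L^1$, and in fact $|\Theta| < n\pi/2$ pointwise), conclude that the large eigenvalues are controlled in an integral sense, giving $\sqrt{g} \in L^1$ with the volume form nondegenerate. (2) Write the Hamiltonian stationary equation \eqref{hstat0} in divergence form with respect to the Euclidean measure: $\Delta_g \Theta = \frac{1}{\sqrt{g}} \partial_i(\sqrt{g}\, g^{ij} \partial_j \Theta) = 0$, so that $\partial_i(\sqrt{g}\, g^{ij}\partial_j\Theta) = 0$ weakly, i.e. $\Theta$ is a weak solution of a linear divergence-form equation $\partial_i(a^{ij}\partial_j\Theta) = 0$ with $a^{ij} = \sqrt{g}\, g^{ij}$. (3) Check the ellipticity ratio of $a^{ij}$: the eigenvalues of $g^{-1}$ are $(1+\lambda_i^2)^{-1}$, and $\sqrt{g} = \prod\sqrt{1+\lambda_i^2}$, so $a^{ij}$ has eigenvalues $\sqrt{g}\,(1+\lambda_i^2)^{-1}$; the lower eigenvalue bound from semi-convexity plus the bound $\Theta < n\pi/2$ forces each $\lambda_i$ to be bounded above on a set of full measure OR, where $\lambda_i$ is large, the structure conspires to keep the ratio of largest to smallest eigenvalue of $a^{ij}$ locally bounded — this is where semi-convexity is essential, since it rules out eigenvalues going to $-\infty$, and the phase bound rules out all eigenvalues simultaneously going to $+\infty$. (4) Once $a^{ij}$ is shown to be uniformly elliptic and bounded on $B_{3/4}$ (possibly after first using the weak solution conditions to upgrade $u$'s Hessian to $L^\infty$ — indeed under semi-convexity plus the equation one expects $C^{1,1}$ bounds, at which point Theorem \ref{Thm_reg_Lipschitz}'s machinery or \cite{ChenWarren} gives smoothness directly on the supercritical set), apply De Giorgi--Nash--Moser to conclude $\Theta \in C^{\gamma}(B_{1/2})$ for some $\gamma = \gamma(n, K, \ldots) > 0$.

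The main obstacle I anticipate is step (3): controlling the ellipticity ratio of $a^{ij} = \sqrt{g}\,g^{ij}$ purely from semi-convexity, without an a priori upper Hessian bound. Semi-convexity gives $\lambda_i \geq -K$, and $\sum \arctan\lambda_i < n\pi/2$ is automatic but not quantitatively useful unless one also knows $\Theta$ is bounded away from $n\pi/2$; the latter would follow from, say, $\Theta$ being a bounded $W^{1,2}$ function that solves an equation, via the strong maximum principle, but making this rigorous at low regularity is delicate. The likely resolution — and the honest structure of the argument — is: semi-convexity already places us in (or near) the supercritical regime after a harmless rotation, so we may first invoke Proposition \ref{Holder1/2to1}--\ref{holder_third} or Theorem \ref{Thm_reg_Lipschitz} to get smoothness (hence $C^\infty$, trivially $C^\gamma$, for $\Theta$) in the genuinely supercritical case, and handle the borderline case $|\Theta| = (n-2)\pi/2$ or the non-supercritical-but-semiconvex case separately by the divergence-form Moser iteration above, where semi-convexity alone is enough to bound the ellipticity ratio of $a^{ij}$ from below and the finiteness $\sqrt g \in L^1$ controls it in the requisite integral norms à la Trudinger's work on equations with unbounded coefficients. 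So the proof will likely split into two regimes, with the low-regularity divergence-form estimate being the technical heart.
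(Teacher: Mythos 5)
Your proposal correctly identifies that the goal is to reach a setting where De Giorgi--Nash applies to $\Theta$, and you correctly flag that the ellipticity ratio of $\sqrt{g}\,g^{ij}$ is not controlled by semi-convexity alone. But the route you sketch misses the key idea and also contains a conceptual inversion.

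The paper's argument is a single clean step, not a case split. Semi-convexity gives $D^2 u \geq -K I$ but no upper bound. A \emph{downward} rotation of $L_u$ by a fixed angle $\varphi$ with $\tan\varphi < 1/K$ sends each eigenvalue $\lambda_i \in [-K,\infty)$ to $\bar\lambda_i = \tan(\arctan\lambda_i - \varphi)$, which now lies in a \emph{bounded} interval (it is monotone in $\lambda_i$, finite at $\lambda_i = -K$, and tends to $\cot\varphi$ as $\lambda_i\to\infty$). So the rotated potential $\bar u$ is $C^{1,1}$, the metric $\bar g = I + (D^2\bar u)^2$ is uniformly elliptic, and $\Delta_{\bar g}\bar\Theta = 0$ is a genuine uniformly elliptic divergence-form equation for which De Giorgi--Nash gives $\bar\Theta \in C^\alpha$ in $\bar x$. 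Your proposal instead tries to get into the \emph{supercritical} regime and invoke the paper's smoothness theorems --- but this inverts the logical relationship: supercritical phase implies semi-convexity, not conversely (e.g.\ $u\equiv 0$ is convex with phase $0$, far below critical for $n\geq 3$). Moreover a downward rotation \emph{decreases} the phase by $n\varphi$, so rotation moves you further from the supercritical regime, not into it. The purpose of the rotation here is purely to tame the Hessian, not to adjust the phase, and no supercriticality is ever assumed or obtained.

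You also omit the step where the hypothesis $u\in C^{1,\beta}$ is actually used. De Giorgi--Nash gives H\"older continuity of $\bar\Theta$ in the rotated variable $\bar x = \cos\varphi\, x + \sin\varphi\, Du(x)$. To return to the original variable $x$ one needs to control the modulus of continuity of $\bar x$ in terms of $x$: from $Du\in C^\beta$ one gets $|\bar x_1 - \bar x_2| \leq C|x_1 - x_2|^\beta$, and since $\Theta(x) = \bar\Theta(\bar x) + n\varphi$, this yields $\Theta \in C^{\alpha\beta}$ in $x$. Without this transfer step you would only have H\"older continuity of $\Theta$ in the intrinsic/rotated coordinates, which is a weaker statement (precisely the content of the cited Chen--Warren corollary that the paper explicitly distinguishes itself from). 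Your ``Trudinger-style unbounded-coefficient Moser iteration'' fallback is not what the paper needs and is unlikely to close on its own, since the ellipticity ratio of $\sqrt{g}\,g^{ij}$ genuinely blows up where a single $\lambda_i\to\infty$, and semi-convexity plus $\sqrt{g}\in L^1$ does not provide the integrability of the ratio that such iterations require.
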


\begin{proof} 
 Since $u$ is semi-convex, we can rotate the gradient graph of $u$ down by an angle $\varphi$, which is dependent on the negative lower bound of $D^2u$, to obtain a new potential $\bar u \in C^{1,1}$. The new rotated coordinates are given by 
$$ \begin{pmatrix} 
\bar x \\ \bar y 
\end{pmatrix} 
= 
\begin{pmatrix} 
\cos \varphi &  \sin \varphi 
\\ -\sin \varphi &\cos \varphi
\end{pmatrix} 
\begin{pmatrix} 
x \\ y
\end{pmatrix}.$$ 

The new phase $\bar \Theta ( \bar x ) = \Theta ( x) - n \varphi $ is a weak solution of the equation $\Delta_{\bar g }\bar \Theta =0$, which is uniformly elliptic since $\bar u \in C^{1,1}$ as a function of $\bar x$. By the De Giorgi-Nash Theorem, $\bar \Theta$ is $C^{\alpha}$ in $\bar x$ for some $\alpha$. 

Since $D u \in C^\beta$ by assumption, we get
\begin{align*}
| \bar x_1 - \bar x_2 | & = | \cos \varphi ( x_1 - x_2 ) + \sin \varphi ( D u ( x_1 ) - D u (x_2 ) ) | \\
& \leq | x_1 - x_2 | + |  D u ( x_1 ) - D u (x_2 )  | \\
& \leq | x_1 - x_2 | + C_1 | x_1 - x_2 | ^ \beta \\
& \leq C_2 | x_1 - x_2 |^\beta .
\end{align*}

Then 
$$ \frac {| \Theta ( x_1 ) - \Theta ( x_2 ) | } {|x_1 - x_2 | ^{\alpha \beta } } \leq 
C_2^\alpha \frac{ | \bar \Theta ( \bar x_1 ) - \bar \Theta ( \bar x_2 ) |}{| \bar x_1 - \bar x_2 |^\alpha }, $$
so $\Theta$ is $C^{\alpha \beta }$ in $x$. 
\end{proof} 

\begin{remark}
Note that the above proposition is different from \cite[Corollary 4.6]{ChenWarren}, which proves that for semi-convex $C^1$ weak solutions of \eqref{hstat0}, the phase $\Theta$ is Hölder continuous with respect to the intrinsic metric on $L_u$. 
\end{remark}

\section{Singular solutions}\label{Sec_singsoln}


In this section, we prove Theorem \ref{Thm_sing_soln}. We begin with a Lemma establishing conditions under which a Lagrangian graph can be rotated upward while preserving the property of being a graph. We refer the reader to Section \ref{Sec_up_rot} for a detailed explanation of how such a rotation is performed on the gradient graph of a $C^{1,1}$ function. 

\begin{lemma}\label{rotatedgraph}
Let $w \in C^2(B_1)$ with $\lambda_{\max}(D^2 w) < 1$ everywhere except possibly on a discrete set, where $\lambda_{\max}$ denotes the largest eigenvalue of $D^2 w$. Then the submanifold of $\R^n \times \R^n$ obtained by rotating the gradient graph of $w$,
\[
L_w = \{ (x, Dw(x)) \;|\; x \in B_1 \},
\]
upward by an angle of $\tfrac{\pi}{4}$ is again a graph.
\end{lemma}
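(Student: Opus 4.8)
The plan is to write the upward rotation explicitly and observe that, under the stated hypothesis, the map producing the new horizontal coordinate is the gradient of a convex function, hence injective. Rotating $L_w$ upward by $\gamma=\tfrac\pi4$ replaces the coordinates $(x,y)$ by
\[
\begin{pmatrix}\bar x\\ \bar y\end{pmatrix}=\begin{pmatrix}\cos\gamma & -\sin\gamma\\ \sin\gamma & \cos\gamma\end{pmatrix}\begin{pmatrix}x\\ y\end{pmatrix},
\]
so that on $L_w=\{(x,Dw(x))\}$ the rotated submanifold is parametrized by $x\in B_1$ through
\[
\bar x=\Phi(x):=\cos\gamma\,x-\sin\gamma\,Dw(x),\qquad \bar y=\Psi(x):=\sin\gamma\,x+\cos\gamma\,Dw(x).
\]
To conclude that this is a graph over the $\bar x$-plane it suffices to show that $\Phi$ is injective: then $\bar y=\Psi\bigl(\Phi^{-1}(\bar x)\bigr)$ is a well-defined function on $\Phi(B_1)$, and by invariance of domain $\Phi(B_1)$ is open and $\Phi^{-1}$ is continuous, so the rotated submanifold is a genuine graph.

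First I would note that $\Phi=D\psi$ for $\psi(x)=\tfrac{\cos\gamma}{2}|x|^2-\sin\gamma\,w(x)$, so that $D^2\psi=\cos\gamma\,I-\sin\gamma\,D^2w=\tfrac{1}{\sqrt2}\bigl(I-D^2w\bigr)$. Since $w\in C^2$, the hypothesis $\lambda_{\max}(D^2w)<1$ off a discrete set gives $\lambda_{\max}(D^2w)\le1$ on all of $B_1$ by continuity; hence $D^2\psi\succeq0$ everywhere on $B_1$ and $D^2\psi\succ0$ off the discrete exceptional set. Thus $\psi$ is convex on the convex domain $B_1$. I would then upgrade convexity to strict convexity: if $\psi$ were affine along some nondegenerate segment $[a,b]\subset B_1$, that segment would contain an interior point $p$ lying outside the discrete set, and $D^2\psi(p)\succ0$ is incompatible with $\psi$ being affine along $[a,b]$ near $p$. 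Hence $\psi$ is strictly convex, so its gradient is strictly monotone, $\langle D\psi(a)-D\psi(b),\,a-b\rangle>0$ for $a\ne b$; in particular $\Phi=D\psi$ is injective, which completes the argument. One may additionally observe that $\Psi=D\phi$ with $\phi(x)=\tfrac{\sin\gamma}{2}|x|^2+\cos\gamma\,w(x)$, so the rotated graph is itself a Lagrangian gradient graph.

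I do not anticipate a serious obstacle here: the essential point is simply that the upward rotation by exactly $\tfrac\pi4$ converts the eigenvalue bound $\lambda_{\max}(D^2w)\le1$ into convexity of the potential $\psi$ governing the new horizontal variable, after which injectivity is the classical monotonicity of gradients of strictly convex functions. The only step requiring a little care is the passage from convexity to strict convexity, where one uses that $B_1$ is convex and that the exceptional set is discrete (and hence cannot contain a segment); note also that at points of the exceptional set $D\Phi$ may be merely positive semidefinite, which reflects the vertical tangent plane of the rotated submanifold there, but this does not affect injectivity.
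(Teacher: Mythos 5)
Your argument is correct and, once unpacked, is essentially the same as the paper's: the paper also assumes two points with coinciding $\bar x$-coordinates, reduces to $x-y = Dw(x)-Dw(y)=V$, and derives $|V|^2 = \int_0^1 \langle V, D^2w(y+tV)V\rangle\,dt < |V|^2$, which is precisely the strict monotonicity of $D\psi$ you invoke. Your packaging of the computation as ``$\Phi = D\psi$ with $\psi$ strictly convex'' is a clean reformulation (and the observation that $\Psi=D\phi$ confirms the rotated submanifold is again a Lagrangian gradient graph is a nice bonus), but it adds no new mathematical content to the paper's direct line-integral argument.
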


\begin{proof} We perform an upward rotation on $L_w$ by an angle of $\frac{\pi}{4}$. Then the rotated submanifold  has the following parametrization 
$$ \Phi ( x ) = \frac 1 {\sqrt 2} ( x - Dw(x) , x + Dw(x)).$$ 
We assume there are points $x, y \in B_1$ such that 
$$ \Phi (x) - \Phi (y)  = \sqrt  2(0,V)$$
is vertical, i.e., the image of $\Phi$ is not a graph. Then $$x-y = Dw(x)-Dw(y) = V.$$
But observe that
\begin{align*} 
\vert V\vert^2 & = \langle V, Dw(x) - Dw(y) \rangle \\
& = \int _0^1 \langle V, D^2 w ( y + t V ) V \rangle \: dt \\
& < \vert V\vert ^2 ,
\end{align*}
since $\lambda_{\max} ( D^2 w ( y + t V ) ) < 1 $ for almost every $t \in [0,1].$
This is a contradiction. Therefore, the rotated submanifold must be a graph. 
\end{proof}

Before constructing singular solutions to prove Theorem \ref{Thm_sing_soln},  we proceed with a useful Lemma which will allow us to take advantage of rotational symmetry when analyzing the Cauchy problem for \eqref{hstat0}.

Note that for the rest of this section, we will denote a point $x \in \R^n,$ by $x = (x^\prime, x_n)$ where $x^\prime \in \R^{n-1}$.

\begin{lemma}\label{Lem_axisymmetric}
Let $u$ be a smooth function defined on a neighborhood of the origin that is axisymmetric, i.e.,
$$ u(x',x_n) = v(|x'|,x_n). $$
Then the Hessian $D^2 u$ has the eigenvalue $\frac{v_r}{r}$ with multiplicity $n-2$, and its two remaining eigenvalues are precisely the eigenvalues of the $2 \times 2$ matrix $D^2 v$.
\end{lemma}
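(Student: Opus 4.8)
The plan is to compute the Hessian of $u$ directly in terms of the function $v$ and the geometry of the map $(x',x_n)\mapsto(|x'|,x_n)$, and then diagonalize the resulting block structure. First I would introduce $r = |x'|$ and the unit radial vector $\omega = x'/r$ in $\R^{n-1}$, so that $u(x) = v(r,x_n)$ with $\partial_{x_i} r = \omega_i$ for $i \le n-1$ and $\partial_{x_n} r = 0$. A first differentiation gives $\partial_{x_i} u = v_r\,\omega_i$ for $i\le n-1$ and $\partial_{x_n} u = v_{x_n}$. Differentiating again and using $\partial_{x_j}\omega_i = (\delta_{ij} - \omega_i\omega_j)/r$, I would obtain, for indices $i,j \le n-1$,
\[
\partial_{x_i}\partial_{x_j} u = v_{rr}\,\omega_i\omega_j + \frac{v_r}{r}(\delta_{ij}-\omega_i\omega_j),
\]
together with $\partial_{x_i}\partial_{x_n} u = v_{r x_n}\,\omega_i$ for $i\le n-1$ and $\partial_{x_n}^2 u = v_{x_n x_n}$.

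The key step is then to read off the eigenstructure from this expression. On the $(n-1)$-dimensional $x'$-block, the matrix acts as $v_{rr}$ in the radial direction $\omega$ and as $\frac{v_r}{r}$ on the orthogonal complement $\omega^\perp \subset \R^{n-1}$, which is $(n-2)$-dimensional. Since the $x_n$-row/column only couples to the radial direction $\omega$ (the entries are proportional to $\omega_i$), the whole $(n-2)$-dimensional space of vectors of the form $(\xi,0)$ with $\xi\perp\omega$ lies in the kernel of $D^2u - \frac{v_r}{r}I$; this gives the eigenvalue $\frac{v_r}{r}$ with multiplicity $n-2$. On the orthogonal complement of that subspace, namely the $2$-dimensional space spanned by $(\omega,0)$ and $(0,1)$, the Hessian acts as the matrix
\[
\begin{pmatrix} v_{rr} & v_{r x_n} \\ v_{r x_n} & v_{x_n x_n}\end{pmatrix} = D^2 v,
\]
so its two remaining eigenvalues are exactly the eigenvalues of $D^2 v$. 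Assembling these two observations proves the claim.

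The one genuine subtlety is the behavior at $r = 0$, where $\omega$ is undefined and the formula above appears singular; I would address this by noting that smoothness of $u$ forces $v_r(0,x_n) = 0$ and in fact $v_r/r$ extends smoothly (it equals $v_{rr}(0,x_n)$ in the limit, by Taylor expansion of the even function $r\mapsto v_r(r,x_n)$), at which point $D^2 u$ at the axis is simply $v_{rr}(0,x_n)$ times the identity on the $x'$-block plus the obvious $x_n$ entries, consistent with both stated conclusions. Away from the axis the computation above is entirely routine, so I do not expect any real obstacle there; the only care needed is this limiting argument on the axis and the bookkeeping of which $2$-dimensional subspace carries the $D^2v$ action.
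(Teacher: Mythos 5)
Your proposal is correct and takes essentially the same route as the paper's own proof: both compute the Hessian in the same block form (radial/angular split on the $x'$-block plus the $v_{rn}$ coupling to the $x_n$-direction) and then read off that $\omega^\perp$ carries the eigenvalue $v_r/r$ while the span of $(\omega,0)$ and $(0,1)$ carries $D^2v$. Your extra remark about the smooth extension of $v_r/r$ across the axis $r=0$ is a valid observation the paper leaves implicit, but it does not change the argument.
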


\begin{proof} Let $r = |x'|$. An elementary computation shows that
$$ D^2 u = \begin{pmatrix} 
\frac{v_r}{r} I_{n-1} + ( v_{rr} - \frac {v_r } r ) \frac{ x^\prime \otimes x^\prime }{r^2} & v_{rn} \frac{x^\prime}{r}\\
v_{rn} \frac{ (x^\prime )^T}{r} & v_{nn}
\end{pmatrix} .$$
For a vector $\xi \in \R^{n-1}$ with $\xi \perp x^\prime$,  we have
$$ D^2 u ( \xi,0 ) = \frac{v_r}{r} (\xi,0).$$
Also, observe that
\begin{align*} D^2 u ( \partial_r )& = v_{rr} \partial_r + v_{rn} \partial_n
\\ D^2 u ( \partial_n ) &= v_{rn} \partial_r + v_{nn} \partial_n.
\end{align*} 
This proves the claim. 
\end{proof}

Now we are ready to prove Theorem \ref{Thm_sing_soln}.

\begin{proof}[Proof of Theorem \ref{Thm_sing_soln}]
Fix a positive integer $k$.  We construct a solution $w$ to \eqref{hstat0} by solving the Cauchy problem 
\begin{equation*}
\begin{aligned} 
\Delta_g \Theta &=0,\\
w ( x^\prime , 0) & = \frac 12 | x^\prime |^2 - |x^\prime |^{2k+2} , \\
w_n ( x^\prime , 0) & = 0 ,\\
w_{nn}( x^\prime , 0)  & = -2 |x^\prime |^{2} ,\\
w_{nnn} ( x^\prime , 0) & = 0. 
\end{aligned}
\end{equation*}

The Cauchy-Kovalevskaya Theorem guarantees the existence of a unique analytic solution $w$ to the Cauchy problem. Observe that $w$ must be axisymmetric, since the Cauchy data is invariant under rotations in the $x^\prime$-plane. 

Furthermore, the Cauchy data ensures that $w$ has a Taylor expansion of the form 
$$
w(x) = \frac 12 r^2 - r^{2k+2} - x_n^2 r^2  + x_n^4W( r,x_n)
$$ where $W$ is an analytic function (and an even function in $r$). 

We have 
\begin{align}
\frac{w_r(x)}{r} & = 1 - (2k+2) r^{2k} - 2 x_n^2 + x_n^4 \frac{ W_r (r,x_n)}{r}, \label{wr/r} \\
w_{rr} (x) & = 1 - (2k+2)(2k+1) r^{2k} -2x_n^2+ x_n^4 W_{rr} (r,x_n ),\nonumber\\
w_{rn} (x) &= -4x_n r + O ( x_n^3)\nonumber. 
\end{align}
So near the origin, we get
\begin{equation*} \frac{w_r(x)}{r} \leq 1 - ( 2k+2)r^{2k} - x_n^2 .\label{lambda1_hd}\end{equation*}
Therefore $D^2w $ has eigenvalues $\lambda_1 , \dots, \lambda_{n-2}$ which reach a local 
maximum value of 1 at the origin by Lemma \ref{Lem_axisymmetric}. 
Moreover, observe that near the origin, we have
\begin{equation} \begin{aligned} \lambda_{n-1}( D^2 w ) &= w_{rr} + O ( w_{rn}^2 ) \\&= 1 - (2k+2) ( 2k+1 ) r^{2k}- 2 x_n^2  + C x_n^2 r^2 + O ( x_n^4 ) \\ & \leq 1  - (2k+2) ( 2k+1 ) r^{2k}-  x_n^2 .\label{lambda_n-1_hd} \end{aligned} \end{equation} 
So $\lambda_{n-1}$ also reaches a local maximum value of $1$ at the origin. The final eigenvalue $\lambda_n$ is $0$ at the origin.

Since $n-1$ eigenvalues of $D^2 w$ have local maximum values of $1$ at the origin, we can rotate the gradient graph of $w$ so that the tangent plane at the origin becomes vertical. 

Using Lemma \ref{rotatedgraph}, we rotate the gradient graph $L_w$ upward by an angle of $\frac \pi 4 $ to obtain a new Lagrangian graph which is the gradient graph of a new potential $u^{(n,k)}$. After a rescaling, the new gradient graph, $L_{u^{(n,k)}}$, is the image of the following parametrization
\begin{equation}\begin{aligned} 
\Phi ( x) &= (x - Dw(x), x + Dw(x)) \\
& = ( [(2k+2) |x^\prime | ^{2k}+2x_n^2 ] x^\prime , [1 + 2 | x^\prime |^2 ]x_n,\\ & \quad  [2-(2k+2) |x^\prime | ^{2k}-2x_n^2 ]x^\prime , [1-2 |x^\prime |^2] x_n ) + O ( x_n^3). \label{rotated_graph_higher}
\end{aligned} \end{equation}
Set $\bar x = x - Dw(x)$ and $\bar y = x + Dw(x).$

To show that the image of $\Phi$ is the graph of a $C^{\frac{1}{2k+1}}$ function, we must show that $\bar y$ is $C^{\frac{1}{2k+1}} $ as a function of $\bar x$. If not, then there would be a point $\bar x_0$ such that 
$$ \limsup_{ \bar x \to \bar x_0} \frac{ | \bar y ( \bar x ) - \bar y ( \bar x_0 ) |}{|\bar x - \bar x_0|^{\frac{1}{2k+1}}} = \infty .$$

Necessarily, $\bar y $ is not differentiable at $\bar x_0$. By our construction, the only point where $\bar y$ is not differentiable is the origin. Therefore, it suffices to establish H\"older continuity of $\bar y$ at the origin. Since $\bar y = - \bar x + 2 x ( \bar x ) $, it is enough to show that $x $ is a $C^{\frac 1 {2k+1} }$ function of $\bar x$ at the origin. From \eqref{rotated_graph_higher}, we get 
$$ |\bar x | ^2 \geq ( 2k + 2 ) |x^\prime | ^{4k+2 } + 4 | x^\prime | ^4 x_n^2 \geq C |x|^{4k+2} $$
near the origin. 
Therefore $u^{(n,k)}  \in C^{\frac{1}{2k+1}}$, as desired.
Considering the image of the $x_1$ axis under $\Phi$ shows that $Du$ has no higher H\"older continuity. 

Since $Du^{(n,k)} \bar x = \bar y( \bar x) $, to show that $u^{(n,k)} \in W^{2,1}$, it is enough to show that $x$ is a $W^{1,1}$ function of $\bar x$. 
We have 
$$ D_{ \bar x } x = ( I - D^2 w ) ^{-1} .$$
Thus, it suffices to show that 
$$ \int_{B_\rho} | (I - D^2 w ) |^{-1}  \: d\bar x < \infty.$$
Using the standard change of variables formula, we get
$$ \int_{B_\rho} | (I - D^2 w ) ^{-1}| \: d\bar x = \int _{x^{-1} (B_\rho)} | (I - D^2 w )| ^{-1}  \det ( I - D^2 w ) \: dx. $$

By equations \eqref{wr/r}-\eqref{lambda_n-1_hd}, there are positive constants $C_1, C_2, C_3 ,C_4$ such that 
$$ | (I - D^2 w)^{-1} | \sim ( 1 - \lambda_{n-1} ( D^2 w ) ) ^{-1} \leq \frac{ 1}{ C_1 | x^\prime |^ {2k} + C_2 x_n ^2 } $$ and 
$$ 1- \lambda_i ( D^2 w ) \leq C_3 |x^\prime | ^ {2k} + C_4 x_n^2 $$ for $1 \leq i \leq n-2.$ 
Note that $\frac{ C_3 |x^\prime |^{2k} + C_4 x_n^2 } { C_1 |x^\prime |^{2k} + C_2 x_n^2 } $ is bounded, and hence integrable, so $u^{(n,k)} \in W^{2,1}.$

Similarly, to show that $\sqrt { g } $ is integrable, note that $\sqrt {  g }  \sim \det ( I - D^2 w )^{-1} $. After changing variables from $\bar x$ back to $x$, the integrand becomes bounded. 

Since $w$ is a smooth solution of \eqref{hstat0}, $\Theta$ is a smooth function on $L_w$. Therefore, after performing the rotation, the phase $ \bar \Theta $ is still a smooth function on the submanifold $L_{u^{(n,k)}}$. Moreover, $\bar \Theta ( \bar x ) = \Theta (x) + n \frac \pi 4 .$ Therefore 
$$ D_{\bar x} \bar \Theta =  (I - D^2 w )^{-1}  D_x \Theta .$$
Hence to show that $\bar \Theta \in W^{1,2} ( B_1)$, it is enough to show that 
$$ \int _{B_1}|(I - D^2 w )^{-1} |^2 \: d \bar x < \infty.$$
Changing variables back to $x$, the integrand becomes bounded again. 

Finally, we see that $u^{(n,k)} $ satisfies \eqref{hstat0} in the integral sense because $L_{u^{(n,k)}} $ is a smooth submanifold and the phase is a smooth harmonic function on $L_{u^{(n,k)}} $. 
\end{proof} 

\begin{remark} 
Every step of the above proof carries over to dimension $2$ except that the phase $\Theta$ lies in $W^{1,1}$ rather than $W^{1,2}$,
i.e., we would get a weak solution if condition (3) in Definition \ref{def1} is replaced by $\Theta \in W^{1,1}(\Omega)$. Although this alternative requirement can allow for a meaningful weak formulation in the Hamiltonian stationary setting, we adopt the condition $\Theta \in W^{1,2}(\Omega)$ to remain consistent with the standard notion of weak solutions in the literature. Note that this also stands in contrast with the theory of special Lagrangians, where in two dimensions any $C^0$ viscosity solution of the special Lagrangian equation is known to be analytic for all phases $|\Theta| \ge 0$ as shown by Warren-Yuan \cite{WY2d}.
\end{remark}

We conclude this section by characterizing the Sobolev regularity of the weak solutions constructed above.

\begin{lemma}\label{integral}
For $q, k >0$, the function $\frac{ 1}{(|x^\prime|^{2k} + |x_n|^2)^q } \in L^1_{\mathrm{loc}} ( \R^n)$ if and only if $$ q< \frac 12 + \frac{ n-1}{2k} .$$

\end{lemma}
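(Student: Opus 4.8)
The plan is to reduce the $n$-dimensional integral to a one-dimensional integral in the radial variable $r = |x'|$ by slicing in the $x_n$ direction, and then to determine the two independent conditions (integrability near $r=0$ for fixed small $x_n$, and integrability in $x_n$ after integrating out $x'$) that together are equivalent to the single inequality $q < \tfrac12 + \tfrac{n-1}{2k}$. Concretely, writing $x' \in \R^{n-1}$ in polar coordinates, it suffices (up to constants depending only on $n$) to study
\[
I = \int_0^1 \int_0^1 \frac{r^{n-2}}{(r^{2k} + t^2)^q} \, dr \, dt,
\]
where I have abbreviated $t = |x_n|$ and restricted to a box, which is harmless for a local statement. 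The idea is to scale the inner integral: for fixed $t$, substitute $r = t^{1/k} s$, so that $r^{2k} + t^2 = t^2 (s^{2k} + 1)$ and $dr = t^{1/k} ds$, giving an inner integral comparable to $t^{(n-1)/k - 2q} \int_0^{t^{-1/k}} \frac{s^{n-2}}{(s^{2k}+1)^q}\, ds$.

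The next step is to analyze the $s$-integral $J(R) = \int_0^R \frac{s^{n-2}}{(s^{2k}+1)^q}\, ds$ as $R = t^{-1/k} \to \infty$. The integrand behaves like $s^{n-2}$ near $0$ (always integrable since $n \geq 2$) and like $s^{n-2-2kq}$ at infinity, so $J(R)$ converges to a finite positive constant as $R \to \infty$ precisely when $n - 1 - 2kq < 0$, i.e. $q > \tfrac{n-1}{2k}$; when $q \leq \tfrac{n-1}{2k}$ it grows like $R^{n-1-2kq}$ (with a logarithm in the borderline case $q = \tfrac{n-1}{2k}$). I would split into these two regimes. In the regime $q > \tfrac{n-1}{2k}$: the inner integral is $\sim t^{(n-1)/k - 2q}$, and $I \sim \int_0^1 t^{(n-1)/k - 2q}\, dt < \infty$ iff $(n-1)/k - 2q > -1$, i.e. $q < \tfrac12 + \tfrac{n-1}{2k}$. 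In the regime $q \leq \tfrac{n-1}{2k}$: the inner integral is $\sim t^{(n-1)/k - 2q} \cdot t^{-(n-1-2kq)/k} = t^{(n-1)/k - 2q - (n-1)/k + 2q} = t^0$, a constant (or with a $|\log t|$ factor in the borderline case), so $I < \infty$ automatically; and indeed $q \leq \tfrac{n-1}{2k}$ is consistent with $q < \tfrac12 + \tfrac{n-1}{2k}$ since $\tfrac12 > 0$. Combining the two regimes shows $I < \infty$ exactly when $q < \tfrac12 + \tfrac{n-1}{2k}$, and $I = \infty$ when $q \geq \tfrac12 + \tfrac{n-1}{2k}$ (which forces $q > \tfrac{n-1}{2k}$, putting us in the first regime where divergence of the outer $t$-integral is exact).

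I do not expect a serious obstacle here; the argument is a standard anisotropic scaling computation. The one point requiring a little care is bookkeeping in the borderline and near-borderline cases — making sure the logarithmic factor that appears when $q = \tfrac{n-1}{2k}$ does not spoil the claimed strict inequality, and checking that the reduction to the box $[0,1]^n$ (rather than a ball) genuinely captures the local behavior, since the only singularity is at the origin and away from the coordinate subspace $\{x_n = 0\}$ the integrand is bounded. An alternative, essentially equivalent route that avoids iterated slicing is to use the layer-cake / distribution-function formula: estimate the measure of the superlevel set $\{(x', x_n) : |x'|^{2k} + |x_n|^2 < \lambda\}$, which is comparable to $\lambda^{(n-1)/(2k)} \cdot \lambda^{1/2} = \lambda^{(n-1)/(2k) + 1/2}$, and then $\int \rho^{-q} = q\int_0^\infty \lambda^{-q-1} |\{\rho^{-q} > \lambda^{-q}\}|\, d\lambda$ converges near the origin iff $\tfrac{n-1}{2k} + \tfrac12 - q > 0$. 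I would present whichever is cleaner; the layer-cake version makes the final inequality most transparent.
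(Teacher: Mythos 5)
Your proposal is correct, and it takes a genuinely different route from the paper's proof. The paper splits the unit box into the two regions $\{s \leq r^k\}$ and $\{s \geq r^k\}$ (where $r = |x'|$, $s = |x_n|$) and, on each, replaces the denominator $(r^{2k}+s^2)^q$ by the comparable monomial to reduce directly to iterated one-variable power integrals, giving matching upper and lower bounds. Your scaling substitution $r = t^{1/k}s$ instead factors the inner integral as $t^{(n-1)/k - 2q}\,J(t^{-1/k})$ with $J(R) = \int_0^R s^{n-2}(s^{2k}+1)^{-q}\,ds$, and then the convergence question separates cleanly into the asymptotics of $J$ at infinity plus a single power of $t$. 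The two methods carry the same content — the exponent $\tfrac{n-1}{2k}$ you see as the threshold where $J$ saturates is the same exponent the paper reads off from the inner $r$-integral in the region $s \geq r^k$ — but your version makes the anisotropic homogeneity of the weight explicit, which arguably explains \emph{why} the answer is $\tfrac12 + \tfrac{n-1}{2k}$ (the $\tfrac12$ from the $x_n$ scaling, the $\tfrac{n-1}{2k}$ from the $x'$ scaling), whereas the paper's split is more elementary and keeps every step as a direct power-law comparison. Your bookkeeping in the borderline case $q = \tfrac{n-1}{2k}$ is also correct: the $|\log t|$ factor is integrable on $(0,1)$, so it does not affect the final threshold. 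The layer-cake alternative you sketch is likewise valid and is perhaps the most transparent of the three, since the superlevel-set volume $\lambda^{(n-1)/(2k)+1/2}$ immediately displays the exponent.

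Two minor points worth noting. First, the statement implicitly requires $n \geq 2$ (the paper only uses this for $n \geq 3$); your remark that $s^{n-2}$ is integrable near $0$ relies on this, so it is worth saying so explicitly. Second, when you reduce to a box $[0,1]^n$ in place of a ball, you should say a word about why the only obstruction to local integrability is the origin: the function $(|x'|^{2k}+|x_n|^2)^{-q}$ is continuous away from $\{x'=0, x_n=0\}$, so comparability of box and ball is automatic. You flag both of these, so the proof is complete as written.
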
 

\begin{proof} 

Let $r = |x'|$ and $s = |x_n|$.  Using polar coordinates in the $x'$-plane, we need to determine when
$$ \int_0^1 \int_0^1 \frac{r^{n-2}} {( r^{2k} + s^2 )^q} \: dr \: ds < \infty.$$
Split the integral into the regions where $s\leq r^k $ and $s\geq r^k $.

In the region $s \leq r^k$, we have 
\begin{align*} \int_0^1 \int _0^{r^k} \frac{ r^{n-2}} { ( r^{2k} + s^ 2 ) ^q } \: ds \: dr &\leq \int_0^1 \int _0^{r^k} r^{n-2-2kq } \: ds \: dr\\
& = \int_0^1 r^{n-2-2kq+k} \: dr. 
\end{align*}

The final integral converges if and only if $n-2-2kq + k > -1$, or equivalently, $q < \frac 12 + \frac{n-1}{2k} .$

In the region $s\geq r^k $, we have 
\begin{align*} \int_0^1 \int _0^{s^{\frac1k}} \frac{ r^{n-2}} { ( r^{2k} + s^ 2 ) ^q } \: dr \: ds &\leq \int_0^1 \int _0^{s^{\frac 1k} } r^{n-2 } s^{-2q}  \: dr \: ds\\
& = \frac{1}{n-1} \int_0^1 s^{\frac{n-1}{k}-2q } \: dr. 
\end{align*}
Again, the last integral is finite if and only if $\frac{ n-1}{k} - 2q > -1$, or equivalently $q < \frac 12 + \frac {n-1}{2k}.$ This shows that the condition on $q$ is sufficient to ensure $\frac{ 1}{(|x^\prime|^{2k} + |x_n|^2)^q } \in L^1_{\mathrm{loc}} ( \R^n)$. 

To show the condition is necessary, again consider the region where $s\geq r^k$. Here we have 

\begin{align*} \int_0^1 \int _0^{r^k} \frac{ r^{n-2}} { ( r^{2k} + s^ 2 ) ^q } \: ds \: dr \geq \int _0^1 \int _0^{s^{\frac 1k} } r^{n-2 - 2kq } \: dr \: ds.
\end{align*} 
The inner integral converges if and only if $q < \frac { n-1} {2k} $, and in this case we have 
\begin{align*} 
\int _0^1 \int _0^{s^{\frac 1k} } r^{n-2 - 2kq } \: dr \: ds = \frac{ 1}{n-1-2kq } \int_0^1 s^{\frac{ n-1 - 2kq }{k}} \: ds 
\end{align*} 
which converges if and only if $q < \frac 12 + \frac {n-1}{2k}.$
\end{proof}

\begin{proposition} The solution $u^{(n,k)}$ constructed above is in $W^{2,p}$ for $p < n - \frac 12 + \frac {n-1}{2k} .$ In particular, $u^{(n,k)} \in W^{2,n} $ for $k < n-1$. 

\begin{proof} 
By the proof of Theorem \ref{Thm_sing_soln}, $u^{(n,k)} \in W^{2,p} $ whenever 
$$ \int_{B_\rho} | (I - D^2 w ) ^{-1}|^p \: d\bar x < \infty. $$
After changing variables and applying equations \eqref{wr/r}-\eqref{lambda_n-1_hd}, convergence of the integral is equivalent to convergence of 
$$ \int _{B_\rho } \frac{ 1}{(|x^\prime |^{2k} + x_n^2 )^{p - n + 1}} \: dx .$$
Applying Lemma \ref{integral} shows that this integral converges if and only if $ p - n+1 <  \frac 12 + \frac { n -1 }{2k} .$
\end{proof} 

\end{proposition}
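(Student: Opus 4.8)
The statement to prove is the Proposition characterizing the Sobolev regularity of $u^{(n,k)}$: it lies in $W^{2,p}$ for $p < n - \frac{1}{2} + \frac{n-1}{2k}$, and in particular in $W^{2,n}$ when $k < n-1$.

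\medskip

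\textbf{Proof proposal.} The plan is to reduce the $W^{2,p}$ membership to the integrability of an explicit rational function and then invoke Lemma \ref{integral}. First I would recall from the proof of Theorem \ref{Thm_sing_soln} that $Du^{(n,k)}(\bar x) = \bar y(\bar x)$ with $D_{\bar x} x = (I - D^2 w)^{-1}$, so that $D^2 u^{(n,k)}$, expressed in the $\bar x$ variables, is controlled pointwise by $|(I - D^2 w)^{-1}|$. Hence $u^{(n,k)} \in W^{2,p}(B_\rho)$ if and only if $\int_{B_\rho} |(I - D^2 w)^{-1}|^p \, d\bar x < \infty$. The next step is the change of variables $\bar x \mapsto x$, which introduces the Jacobian factor $\det(I - D^2 w)$; this is exactly the manipulation already carried out in the $W^{2,1}$ argument of Theorem \ref{Thm_sing_soln}, so it applies verbatim here with the exponent $p$ in place of $1$.

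\medskip

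The heart of the estimate is then to compare $|(I - D^2 w)^{-1}|^p \det(I - D^2 w)$ with a pure power of $|x'|^{2k} + x_n^2$. Using the eigenvalue expansions \eqref{wr/r}--\eqref{lambda_n-1_hd}: the factor $1 - \lambda_{n-1}(D^2 w)$ is comparable to $C_1 |x'|^{2k} + C_2 x_n^2$ and dominates $|(I-D^2w)^{-1}|$ up to a constant, while the $n-2$ eigenvalues $1 - \lambda_i(D^2 w)$ ($1\le i \le n-2$) each satisfy $1 - \lambda_i \le C_3|x'|^{2k} + C_4 x_n^2$, i.e. are also comparable to $|x'|^{2k} + x_n^2$ — and the remaining eigenvalue $1 - \lambda_n \to 1$ is bounded above and below. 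Multiplying these out, the integrand is comparable to
\[
\big(|x'|^{2k} + x_n^2\big)^{-p} \cdot \big(|x'|^{2k} + x_n^2\big)^{n-1} = \big(|x'|^{2k} + x_n^2\big)^{-(p - n + 1)},
\]
so convergence of the transformed integral is equivalent to $\frac{1}{(|x'|^{2k} + x_n^2)^{p-n+1}} \in L^1_{\mathrm{loc}}$.

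\medskip

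Finally, apply Lemma \ref{integral} with $q = p - n + 1$: this is integrable precisely when $p - n + 1 < \frac{1}{2} + \frac{n-1}{2k}$, that is, $p < n - \frac{1}{2} + \frac{n-1}{2k}$, which is the claimed range. For the particular consequence, note $n - \frac{1}{2} + \frac{n-1}{2k} > n$ is equivalent to $\frac{n-1}{2k} > \frac{1}{2}$, i.e. $k < n-1$; so in that case $n$ lies in the admissible range of exponents and $u^{(n,k)} \in W^{2,n}$.

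\medskip

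\emph{Main obstacle.} The only delicate point is ensuring that the pointwise comparison $|(I - D^2 w)^{-1}| \sim (1 - \lambda_{n-1}(D^2 w))^{-1}$ is uniform near the origin — one must check that the $2\times 2$ block involving $w_{rr}, w_{rn}, w_{nn}$ does not produce an eigenvalue of $D^2 w$ closer to $1$ than $\lambda_{n-1}$ along some degenerate direction. This is handled by the expansion in \eqref{lambda_n-1_hd}, where the off-diagonal $w_{rn} = O(x_n r)$ contributes only at order $x_n^2 r^2$, lower than the governing term $x_n^2$; so the smallest gap $1 - \lambda_{\max}(D^2 w)$ is indeed comparable to $|x'|^{2k} + x_n^2$, and the rest is bookkeeping already done in Lemma \ref{integral}.
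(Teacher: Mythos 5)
Your proposal is correct and follows essentially the same approach as the paper: reduce $W^{2,p}$ membership to integrability of $|(I-D^2w)^{-1}|^p$ in $\bar x$, change variables to $x$ picking up the Jacobian $\det(I-D^2w)$, use the eigenvalue expansions to see the transformed integrand is comparable to $(|x'|^{2k}+x_n^2)^{-(p-n+1)}$, and invoke Lemma \ref{integral}. You simply make explicit the intermediate bookkeeping (the factor $\det(I-D^2w)\sim(|x'|^{2k}+x_n^2)^{n-1}$ from the $n-1$ eigenvalues degenerating to $1$, and the bounded factor from $1-\lambda_n$) that the paper compresses into one sentence.
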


\section{Regularity of Lipschitz continuous HSL graphs}\label{Sec_Lipschitz}

In this section, we prove  Theorem \ref{Thm_reg_Lipschitz} by first establishing an upward rotation technique.

\subsection{Upward Rotation} \label{Sec_up_rot} We provide a detailed explanation of how an upward rotation is performed on the gradient graph of $u\in C^{1,1}$. 

\begin{proposition}\label{upward}
    Let $u$ be a $C^{1,1}$ function on $B_R(0)\subset \re^n$. Then the Lagrangian submanifold $ L_u=(x,Du(x))\subset \re^n\times \re^n$ can be represented as a gradient graph $(\bar{x},D\bar{u}(\bar{x}))$ of the new potential \begin{equation}
    \bar{u}(x)=u(x)-\sin\gamma\cos\gamma\frac{\vert Du\vert ^2-\vert x\vert^2}{2}-\sin^2\gamma (x\cdot Du(x)) \label{ubar}
    \end{equation}
    such that in the $\bar x$ coordinates the rotated gradient graph is Lipschitz.
\end{proposition}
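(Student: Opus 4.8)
The plan is to exhibit the upward rotation explicitly as a rotation in the $(x,y)$-plane of $\re^n\times\re^n$ by the angle $\gamma$, verify that the rotated Lagrangian submanifold $L_u$ is still a gradient graph over the new base $\bar x$, identify the new potential $\bar u$, and finally check that $\bar u \in C^{1,1}$ so that the rotated graph is Lipschitz.

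\emph{Step 1: Set up the rotation and the new coordinates.} I would define the rotated coordinates by
\[
\begin{pmatrix} \bar x \\ \bar y \end{pmatrix}
=
\begin{pmatrix} \cos\gamma & \sin\gamma \\ -\sin\gamma & \cos\gamma \end{pmatrix}
\begin{pmatrix} x \\ y \end{pmatrix},
\]
so that on $L_u$, where $y = Du(x)$, one has $\bar x = \cos\gamma\, x + \sin\gamma\, Du(x)$ and $\bar y = -\sin\gamma\, x + \cos\gamma\, Du(x)$. The first task is to show that $x\mapsto \bar x$ is a bi-Lipschitz homeomorphism of $B_R(0)$ onto its image, so that $\bar y$ can be viewed as a function of $\bar x$. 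Since $u\in C^{1,1}$, $D^2u$ exists a.e. and is bounded; the differential of the map $x\mapsto \bar x$ is $\cos\gamma\, I_n + \sin\gamma\, D^2u(x)$, which is invertible provided $\gamma$ is small enough that $\cos\gamma\, I_n + \sin\gamma\, D^2u$ stays uniformly positive definite (using $\|D^2u\|_{L^\infty}<\infty$; for $\gamma$ close to $\pi/4$ one instead needs the precise structure, but small $\gamma$ suffices for the statement). Monotonicity of the gradient-type map then gives injectivity, and the inverse is Lipschitz, so $\bar x(B_R)$ is open and $x(\bar x)$ is Lipschitz.

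\emph{Step 2: Identify the new potential.} The content of the Lagrangian condition is that the rotated submanifold is again Lagrangian, hence locally a gradient graph: $\bar y = D\bar u(\bar x)$ for some potential $\bar u$. To pin down $\bar u$ I would verify directly that the function defined by \eqref{ubar} satisfies $D\bar u(\bar x) = \bar y$. The cleanest route is to compute $\frac{\partial}{\partial \bar x_i}$ of $\bar u(x(\bar x))$ using the chain rule: $D_{\bar x}\bar u = (D_{\bar x}x)^{T}\, D_x\bar u$, where from \eqref{ubar}
\[
D_x\bar u = Du - \sin\gamma\cos\gamma\,(D^2u\, Du - x) - \sin^2\gamma\,(Du + D^2u\, x)
= (\cos\gamma\, I - \sin\gamma\, D^2u)\big(\cos\gamma\, Du - \sin\gamma\, x\big),
\]
after factoring. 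Since $D_{\bar x}x = (\cos\gamma\, I + \sin\gamma\, D^2u)^{-1}$ and this matrix is symmetric, I need the identity
\[
(\cos\gamma\, I + \sin\gamma\, D^2u)^{-1}(\cos\gamma\, I - \sin\gamma\, D^2u) = (\cos\gamma\, I - \sin\gamma\, D^2u)(\cos\gamma\, I + \sin\gamma\, D^2u)^{-1},
\]
which holds because $D^2u$ commutes with any polynomial in itself; applying it gives $D_{\bar x}\bar u = \cos\gamma\, Du - \sin\gamma\, x = \bar y$, as required. (One should first establish this at points of twice-differentiability and then upgrade to everywhere via the $C^{1,1}$ regularity/absolute continuity of $Du$, or simply observe that $\bar u$ is a $C^{1,1}$ function whose a.e. gradient equals the continuous function $\bar y(\bar x)$, hence they agree everywhere.)

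\emph{Step 3: Lipschitz regularity of $\bar u$.} Finally I would show $D\bar u = \bar y$ is Lipschitz in $\bar x$: $\bar y(\bar x) = (-\sin\gamma\, x + \cos\gamma\, Du(x))\circ x(\bar x)$ is a composition of the Lipschitz map $x\mapsto -\sin\gamma\, x + \cos\gamma\, Du(x)$ (Lipschitz since $Du$ is) with the Lipschitz inverse $\bar x\mapsto x(\bar x)$ from Step 1, hence Lipschitz. Therefore $\bar u\in C^{1,1}$ and the rotated gradient graph is Lipschitz. I expect \textbf{Step 1} — proving that the base-point map $x\mapsto\bar x$ is a bi-Lipschitz homeomorphism, with the invertibility of $\cos\gamma\, I + \sin\gamma\, D^2u$ only guaranteed a.e. — to be the main technical obstacle, since one must pass carefully between a.e. statements about $D^2u$ and pointwise statements about the homeomorphism and the potential; this is exactly where the $C^{1,1}$ hypothesis (rather than merely $C^{1,\beta}$) is used.
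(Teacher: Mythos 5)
Your overall strategy matches the paper's: set up the rotated coordinates, prove that $x\mapsto\bar x$ is bi-Lipschitz so the rotated submanifold is still a graph, identify the new potential, and verify $D\bar u$ is Lipschitz. However, there are two concrete problems in the write-up.

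\emph{Sign error in the rotation convention.} The rotation matrix you wrote, $\begin{pmatrix}\cos\gamma & \sin\gamma\\ -\sin\gamma & \cos\gamma\end{pmatrix}$, is a \emph{downward} rotation by $\gamma$; it is inconsistent with formula \eqref{ubar}. One can check directly that with your $\bar x=\cos\gamma\,x+\sin\gamma\,Du$, $\bar y = -\sin\gamma\,x+\cos\gamma\,Du$, the exact 1-form $\sum\bar y_i\,d\bar x_i$ on $L_u$ has primitive $u+\sin\gamma\cos\gamma\frac{|Du|^2-|x|^2}{2}-\sin^2\gamma(x\cdot Du)$, i.e.\ the middle sign is $+$, not $-$. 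Formula \eqref{ubar} corresponds instead to the \emph{upward} rotation $\begin{pmatrix}\cos\gamma & -\sin\gamma\\ \sin\gamma & \cos\gamma\end{pmatrix}$, i.e.\ $\bar x = \cos\gamma\,x - \sin\gamma\,Du$, $\bar y=\sin\gamma\,x+\cos\gamma\,Du$.

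\emph{The claimed factorization in Step 2 is incorrect.} From \eqref{ubar}, expanding $D_x\bar u = Du - \sin\gamma\cos\gamma\,(D^2u\,Du - x) - \sin^2\gamma\,(Du + D^2u\,x)$ gives
\[
\cos^2\gamma\,Du + \sin\gamma\cos\gamma\,x - \sin\gamma\cos\gamma\,D^2u\,Du - \sin^2\gamma\,D^2u\,x = (\cos\gamma\,I-\sin\gamma\,D^2u)(\cos\gamma\,Du + \sin\gamma\,x),
\]
with a $+$ sign in the second factor, not the $-$ you wrote. With the \emph{upward} rotation $D_x\bar x = \cos\gamma\,I - \sin\gamma\,D^2u$, so $D_{\bar x}x=(\cos\gamma\,I-\sin\gamma\,D^2u)^{-1}$ and the chain rule yields $D_{\bar x}\bar u = \cos\gamma\,Du+\sin\gamma\,x=\bar y$ \emph{directly}; the two matrix factors are exact inverses and the commutativity trick is unnecessary. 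With your stated conventions, the cancellation simply does not occur and the verification fails.

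As for Step 1, which you flag as the main obstacle, the paper avoids the a.e.-invertibility issue entirely: choosing $\gamma=\tfrac12(\tfrac\pi2-\arctan K)$ with $K=\sup|D^2u|$ and setting $w=u-\tfrac K2|x|^2$ (a concave function), one computes $\frac{|\bar x_1-\bar x_2|^2}{\cos^2\gamma}$ in terms of $Dw$, invokes the pointwise monotonicity inequality $(x_1-x_2)\cdot(Dw(x_1)-Dw(x_2))\leq 0$ for the concave $w$ (valid without any second derivatives), and obtains the clean bi-Lipschitz estimate $|\bar x_1-\bar x_2|\geq(\cos\gamma-K\sin\gamma)|x_1-x_2|$, which gives global injectivity immediately. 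This is more robust than passing from a.e.\ positive definiteness of the Jacobian to injectivity. Finally, the paper identifies $\bar u$ constructively by integrating $\sum\bar y_i\,d\bar x_i$ rather than verifying a guessed formula by differentiation; both are legitimate once the signs are fixed.
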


\begin{proof} Let $K = \sup_{B_R} |D^2 u |$. Since 
\[
\sum_{i=1}^n \arctan \lambda_i<n\pi/2
\]
set \[\gamma = \frac{ 1}{ 2 } ( \frac{ \pi } 2  - \arctan K ). \] 
Define the new rotated coordinates on $\C^n$ by 
$$ \begin{pmatrix} 
\bar x \\ \bar y 
\end{pmatrix} 
= 
\begin{pmatrix} 
\cos \gamma & - \sin \gamma 
\\ \sin \gamma &\cos \gamma  
\end{pmatrix} 
\begin{pmatrix} 
x \\ y
\end{pmatrix}. $$
For any two points $(x_1, D u ( x_1) ) $, $ (x_2, D u ( x_2) )$, where $x_1,x_2\in B_R$, we estimate 
$$ | \bar x_1  - \bar x_2 | ^2  = | \cos \gamma ( x_1  - x_2 ) - \sin \gamma ( D u ( x_1) - D u ( x_2 ) )|^2 .$$
Let $w(x) = u(x) - \frac K 2 | x|^2 $. Then 
\begin{equation*} 
\begin{aligned} 
\frac{ | \bar x_1 - \bar x_2  |^2 }{ \cos^2 \gamma }  
& = | x_1 - x_2 - \tan \gamma ( D u ( x_1) - D u ( x_2 ) ) |^2 \\
& = | x_1 - x_2- K \tan \gamma ( x_1 - x_2 )  - \tan \gamma ( D w ( x_1) - D w ( x_2 ) ) |^2\\
& = ( 1 - K \tan \gamma )^2 |x_1-x_2|^2 + \tan ^2 \gamma | D w ( x_1) - D w ( x_2)|^2 \\
& \quad -2 \tan \gamma ( 1 - K \tan \gamma ) ( x_1 - x_2 ) \cdot ( D w ( x_1 ) - D w ( x_2)) .
\end{aligned} 
\end{equation*} 
Since $|D^2 u |\leq K, $ $w$ is concave, yielding \[( x_1 - x_2 ) \cdot ( D w ( x_1 ) - D w ( x_2))  \leq 0.\]
Observe that as $\gamma < \frac \pi 2 - \arctan K $, we get  \[\tan \gamma < \cot ( \arctan K ) = \frac 1 K .\] Therefore 
\begin{equation}
    | \bar x_1 - \bar x_2 | \geq (\cos \gamma - K \sin \gamma )| x_1 - x_2| \label{lambda},
 \end{equation}
which shows that the rotated submanifold is still a graph, in particular, $(\bar x, D \bar u (\bar x))$. Note
\begin{equation*} 
\begin{aligned}
    \sum_{i=1}^n \bar y_i \: d\bar x_i &= \sum_{i=1}^n (\sin \gamma x^i +\cos \gamma u_i(x))(\cos \gamma \: dx^i-\sin\gamma u_{ij}(x)\: dx^j\\
     &= \sum_{i=1}^n (\sin\gamma \cos \gamma x^i \: dx^i+ \cos^2\gamma u_i(x)\: dx^i-\sin^2\gamma x^iu_{ij}(x)\: dx^j\\
     &-\cos\gamma \sin\gamma u_i(x)u_{ij}(x)\: dx^j)\\
     &= \sin\gamma \cos \gamma D\left (\frac{|x|^2}{2}\right )+\cos^2 \gamma Du(x)\\
     & \quad -\sin^2\gamma(D(x\cdot Du(x))-Du(x))-\cos\gamma\sin\gamma D\left (\frac{|Du(x)|^2}{2}\right)\\
     &=Du(x)-\sin\gamma\cos\gamma D\left (\frac{|Du(x)|^2-|x|^2}{2}\right )-\sin^2\gamma D(x\cdot Du(x)).
\end{aligned} 
\end{equation*}

So,
\[
\bar u (x)=u(x)-\sin\gamma\cos\gamma\frac{\vert Du\vert ^2-\vert x\vert^2}{2}-\sin^2\gamma (x\cdot Du(x)).
\]

Observe that the rotated potential $\bar u$ is in $C^{1,1} $ 
\begin{equation*}
    \begin{aligned}
        |D\bar u(\bar x_1)-D\bar u(\bar x_2)|  
        & \leq  \frac{ | \sin \gamma  ( x_1 - x_2 ) + \cos \gamma ( D u ( x_1) - D u ( x_2 ) ) | } { ( \cos \gamma  - K \sin \gamma ) |x_1 - x_2  |}|\bar x_1-\bar x_2|\\
        &\leq \bigg(   \frac{\sin \gamma }{ \cos \gamma - K \sin \gamma } + \frac{ 1}{ \cos \gamma - K \sin \gamma } \| u \| _{ C^{1,1} }\bigg) |\bar x_1-\bar x_2|.
    \end{aligned}
\end{equation*}
This shows that the rotated gradient graph is Lipschitz.
\end{proof}

\subsection{Proof of Theorem \ref{Thm_reg_Lipschitz}.}
Now we are ready to prove Theorem \ref{Thm_reg_Lipschitz}.
\begin{proof}[Proof of Theorem \ref{Thm_reg_Lipschitz}.]
    We prove the smoothness of $u$ on a neighborhood, $U_0$, of the set $\{ x \in B_{\frac 12 } \: \vert \: \Theta ( x) \geq (n-2 ) \frac \pi 2 \} .$ By symmetry (or a downward rotation argument) it will follow that the result is also true on a neighborhood of the set $\{x \in B_{\frac12} \: \vert \: \Theta ( x) \leq - ( n-2 ) \frac \pi 2 \}.$
    
    We perform an upward rotation of the gradient graph $\{(x, Du(x))\: \vert\:  x \in B_1\}$ by an angle of $\gamma$ as outlined in the above Proposition. Let $\Omega=\bar x (U)$ where $U = \{ x \in B_{\frac 34} \: \vert \: \Theta ( x) \geq (n-2) \frac \pi 2 - (n-1)\gamma\}.$
    
    So, $\bar u$ satisfies the Lagrangian mean curvature equation in $\Omega$,
    \begin{equation}
        \sum_{i=1}^n \arctan \bar \lambda_i=\bar \Theta(\bar x), \label{rs}
    \end{equation} 
    where $\bar\lambda_i$ are the eigenvalues of $D^2 \bar u$. Since the upward rotation increases the phase, we have
     \[
    \sum \arctan \bar \lambda_i  = \sum \arctan \lambda_ i + n \gamma . \]
In particular, this ensures that $\bar \Theta \geq ( n-2) \frac \pi 2 + \gamma $ in $\Omega$.

    The De Giorgi-Nash Theorem applied to \eqref{hstat0}, gives $\Theta\in C^{\alpha}(B_1)$, for some $\alpha$, with estimate
    \begin{equation}
        \|\Theta\|_{C^{\alpha}(B_{3r'/4})}\leq C(\| D^2 u\|_{L^{\infty}(B_{r'})})\label{DGN} 
    \end{equation} where $B_{r'}\subset U.$

   Now denoting \[
    \lambda=\frac{1}{\cos\gamma-K\sin\gamma},
    \]
    we verify that $\bar \Theta \in C^{\alpha}(B_{\bar r}) \subset \Omega$ where $\bar r= r'/2\lambda$ and $B_{r'}\subset U$:
    \[
    \frac{|\bar\Theta(\bar x_1)-\bar\Theta(\bar x_2)|}{|\bar x_1-\bar x_2|^\alpha}\leq  \frac{|\Theta( x_1)-\Theta( x_2)|}{| x_1- x_2|^{\alpha}}\lambda^{\alpha}\leq \|\Theta\|_{C^{\alpha}(B_{r'})}\lambda^{\alpha}.
    \]
    After modifying the uniformly elliptic second order equation \eqref{rs} to a concave one (see \cite[pg 347]{ChenWarren} or \cite[Lemma 2.2]{CPW}), and applying \cite[Corollary 1.3]{CC01}, we get $\bar u\in C^{2,\alpha}(B_{\frac{\bar r}{2}})$ with estimates
    \[
    |D^2\bar u(\bar x)-D^2\bar u(0)|\leq C\bigg( \|\bar u\|_{L^{\infty}(B_{\bar{r})}}+\|\bar \Theta\|_{C^{\alpha}(B_{\bar{r}})}\bigg) |\bar x |^\alpha 
    \] where $C$ depends on $K$ and the dimension $n$.
    Now, we rotate the gradient graph $\{ (\bar x, D\bar u(\bar x)\:|\: \bar x \in B_{\frac{\bar r}{2}}\}$ down by an angle of $\gamma$ to the original coordinates: 
    $$ \begin{pmatrix} 
x \\  y 
\end{pmatrix} 
= 
\begin{pmatrix} 
\cos \gamma &  \sin \gamma 
\\ -\sin \gamma &\cos \gamma  
\end{pmatrix} 
\begin{pmatrix} 
\bar x \\ \bar y
\end{pmatrix}. $$

Note that 
\[
\frac{dx}{d\bar x}=\cos\gamma I_n+\sin\gamma D^2_{\bar x}\bar u(\bar x)=\bigg(\frac{d\bar x}{d x}\bigg)^{-1}\geq \frac{1}{\cos\gamma+\sin\gamma K}I_n=\tilde\lambda I_n.
\]

Using the above to connect the oscillations of the original Hessian to the rotated Hessian, for $x\in B_{r_0}$ where $B_{r_0}\subset \bar x^{-1}(B_{\frac{\bar r}{2}})$, we get

\[
|D^2u(x)-D^2u(0)|\leq C \frac{1}{\tilde \lambda^{2+\alpha}}\bigg(\|\bar u\|_{L^{\infty}(B_{\bar r})}+\|\Theta\|_{C^{\alpha}(B_{\bar r})}\bigg)|x|^{\alpha}.
\]

Recalling \eqref{ubar} and the inequality connecting the $C^{\alpha}$ norms of $\Theta$ and $\bar \Theta$, we get

\begin{align*}
    |D^2u(x)-D^2u(0)| &\leq C\frac{1}{\tilde \lambda^{2+\alpha}}\bigg(\| u\|_{L^{\infty}(B_1)}+\| Du\|_{L^{\infty}(B_1)}\\
    &+ \frac{1}{2}[1+\| Du\|^2_{L^{\infty}(B_1)}]+ \lambda^{\alpha}\bar r\|\Theta\|_{C^{\alpha}(B_1)}\bigg)|x|^{\alpha}.
\end{align*}

Combining the above with \eqref{DGN}, and applying Schauder theory (see \cite[pg 347-349]{ChenWarren} for a detailed explanation) and a covering argument, we get the desired estimate. 
\end{proof}

\bibliographystyle{amsalpha}
\bibliography{AMS}

\end{document}